\newcommand*{\NoBreakPar}{\vspace{\baselineskip}\par\nobreak\@afterheading}
\crefname{equation}{}{}
\newcommand{\M}{\ensuremath{\mathbf{M}}\xspace}
\newcommand{\m}{\ensuremath{\mathbf{m}}\xspace}
\renewcommand{\H}{\ensuremath{\mathbf{H}}\xspace}
\newcommand{\h}{\ensuremath{\mathbf{h}}\xspace}
\renewcommand{\P}{\ensuremath{\mathbf{P}}\xspace}
\newcommand{\Q}{\ensuremath{\mathbf{Q}}\xspace}
\newcommand{\Real}{\ensuremath{\mathbb{R}}\xspace}
\newcommand{\F}{\ensuremath{\mathbf{F}}\xspace}
\newcommand{\f}{\ensuremath{\mathbf{f}}\xspace}
\newcommand{\A}{\ensuremath{\mathbf{A}}\xspace}
\newcommand{\I}{\ensuremath{\mathbf{I}}\xspace}
\newcommand{\Y}{\ensuremath{\mathbf{Y}}\xspace}
\newcommand{\T}{\ensuremath{\mathbf{T}}\xspace}
\renewcommand{\k}{\ensuremath{\mathbf{k}}\xspace}
\newcommand{\n}{\ensuremath{\mathbf{n}}\xspace}
\newcommand{\bnabla}{\boldsymbol\nabla}
\begin{document}

\title{Heterogeneous multiscale methods for the Landau-Lifshitz
  equation} \author{Lena Leitenmaier \thanks{Department of
    Mathematics, KTH, Royal Institute of Technology, Stockholm,
    Sweden, (lenalei@kth.se).} \and Olof Runborg \thanks{Department
    of Mathematics, KTH, Royal Institute of Technology, Stockholm,
    Sweden, (olofr@kth.se).}}

 \pagestyle{myheadings} \markboth{HMM for the
   Landau-Lifshitz equation}{L. Leitenmaier and
   O. Runborg} \maketitle

%\maketitle

\begin{abstract}
  In this paper, we present a finite difference heterogeneous
  multiscale method for the Landau-Lifshitz equation with a highly
  oscillatory diffusion coefficient. The approach combines a higher
  order discretization and artificial damping in the so-called micro
  problem to obtain an efficient implementation.  The influence of
  different parameters on the resulting approximation error is
  discussed.  Numerical examples for both periodic as well as more
  general coefficients are given to demonstrate the functionality of
  the approach.
\end{abstract}

\begin{keywords} Heterogeneous Multiscale Methods;  Micromagnetics;
\end{keywords}

\begin{AMS}
    65M15; 35B27; 78M40
\end{AMS}

\section{Introduction}
The simulation of ferromagnetic composites can play an important
role in the development of magnetic materials. A typical approach to
describing magnetization dynamics of ferromagnetic materials is using
the micromagnetic version of the Landau-Lifshitz equation, which
states
\begin{align}\label{eq:LL}
  \partial_t \M^\varepsilon = - \M^\varepsilon \times \H(\M^\varepsilon) - \alpha \M^\varepsilon \times \M^\varepsilon \times \H(\M^\varepsilon),
\end{align}
where $\M^\varepsilon$ is the magnetization vector,
$\H(\M^\varepsilon)$ the effective field acting on the magnetization
and the material constant $\alpha$ describes the strength of
damping. While the effective field contains several important
contributions, we here consider a simplified model, only taking into
account exchange interaction, and introduce a coefficient
$a^\varepsilon$ describing the material variations in the
composite. The parameter $\varepsilon \ll 1$ represents the scale of
these variations. We then have
\[\H(\M^\varepsilon) := \bnabla \cdot (a^\varepsilon \bnabla  \M^\varepsilon),\]
a model that was first described in \cite{hamdache}. Similar
models have also recently been used by for example \cite{alouges2019stochastic}
as well as  \cite{multilayer} and \cite{highcontrast}.

For small values of $\varepsilon$, it becomes computationally very
expensive and at some point infeasible to provide proper numerical
resolution for a simulation of \cref{eq:LL}. Hence we aim to apply
numerical homogenization based on the approach of Heterogeneous
Multiscale Methods (HMM) to the problem. In this framework, one
combines a coarse scale macro problem with a micro problem resolving
the relevant fast scales on a small domain in order to obtain an approximation to the
effective solution corresponding to the problem.

For a simplified Landau-Lifshitz problem with a highly oscillatory external
field and no spatial interaction, a possible HMM setup was
introduced in \cite{arjmand1} and extended to a non-zero temperature
scenario in \cite{arjmand2}.

For the problem we consider here, \cref{eq:LL}, the homogenization
error has been analyzed in \cite{paper1}. There it is also shown
that $\M^\varepsilon$ exhibits fast oscillations in both space and
time, where the spatial variations are of order
$\mathcal{O}(\varepsilon)$ while the temporal ones are of order
$\mathcal{O}(\varepsilon^2)$ and get damped away exponentially with
time, depending on the given value of $\alpha$. In \cite{paper2}
several ways to set up HMM were discussed and the errors introduced
in the numerical homogenization process, the so-called upscaling
errors, where analyzed. In this paper, we focus on numerical aspects
related to the implementation of HMM for \cref{eq:LL}.  In
\Cref{sec:hmm}, we first give an overview of the method and include
relevant known results from \cite{paper1} and \cite{paper2}. We then
discuss some aspects of time integration of the Landau-Lifshitz
equation in \Cref{sec:ts} and suggest suitable methods for the time
stepping in the macro and micro problem, respectively.
\Cref{sec:micro} focuses on the HMM micro problem. We study how to
choose initial data for the micro problem that is appropriately
coupled to the current macro scale solution in \Cref{sec:initial},
before using numerical example problems to investigate several
factors that influence the errors introduced in the HMM averaging
process in \Cref{sec:micro_size}.  In \Cref{sec:num_ex} we present
numerical examples to show that the HMM approach can also be applied
to locally-periodic and quasi-periodic problems.

\section{Heterogeneous Multiscale Methods}\label{sec:hmm}

In this section, we introduce the concept of Heterogeneous
Multiscale Methods, discuss how we choose to set up a HMM model for the
Landau-Lifshitz problem \cref{eq:LL} and give relevant error estimates that were
introduced in \cite{paper1} and \cite{paper2}.

\subsection{Problem description}
%\subsection{HMM for the Landau-Lifshitz equation}
The specific multiscale problem we consider in this article is to find $\M^\varepsilon$
that satisfies the nonlinear initial value problem
\begin{subequations}\label{eq:prob}
\begin{align}
  \partial_t \M^\varepsilon &= - \M^\varepsilon \times \bnabla \cdot(a^\varepsilon \bnabla \M^\varepsilon) - \alpha \M^\varepsilon \times \M^\varepsilon \times \bnabla \cdot(a^\varepsilon \bnabla \M^\varepsilon), \\
  \M^\varepsilon(x, 0) &= \M_\mathrm{init},
\end{align}
\end{subequations}
  with periodic boundary conditions, on a fixed time interval
  $[0, T]$ and a spatial domain $\Omega = [0, L]^d$ for some
  $L \in \mathbb{N}$ and dimension $d = 1, 2, 3$.  Here $a^\varepsilon$ is a
  material coefficient which oscillates with a frequency determined
  by $\varepsilon$.  We furthermore assume the following.
\begin{itemize}
\item [(A1)] The material coefficient function $a^\varepsilon$ is in
  $C^\infty(\Omega)$ and bounded by constants
  $a_\mathrm{min}, a_\mathrm{max} > 0$; it holds that
  $a_\mathrm{min}\leq a^\varepsilon(x)\leq a_\mathrm{max}$ for all $x \in \Omega$.

\item [(A2)] The damping coefficient $\alpha$ and the oscillation period
$\varepsilon$ are small, $0 < \alpha \le 1$ and $0 < \varepsilon < 1$.
\item [(A3)] The initial data $\M_\mathrm{init}(x)$ is such that
  $| \M_\mathrm{init}(x)| = 1$ for all $x \in \Omega$, which implies
  that $|\M^\varepsilon(x, t)| = 1$ for all $x \in \Omega$ and
  $t \in [0, T]$.
\end{itemize}

When the material coefficient is periodic,
$a^\varepsilon = a(x/\varepsilon)$ where
$\varepsilon = L/\ell$ for some $\ell \in \mathbb{N}$, one
can analytically derive a homogenized problem corresponding to
\cref{eq:prob}, as shown in \cite{paper1}. The solution $\M_0$ to
this homogenized problem satisfies
\begin{subequations}\label{eq:hom}
\begin{align}
  \partial_t \M_0 &= - \M_0 \times \bnabla \cdot(\bnabla \M_0 \A^H) - \alpha \M_0 \times \M_0 \times \bnabla \cdot(\bnabla \M_0 \A^H), \\
  \M_0(x, 0) &= \M_\mathrm{init},
\end{align}
\end{subequations}
where $\A^H$ is the same homogenized coefficient matrix as for
standard elliptic homogenization problems,
\begin{align}\label{eq:AH}
  \A^H := \int_Y a(y) \left( \I + (\bnabla_y \boldsymbol \chi)^T \right) dy\,.
\end{align}
Here $\boldsymbol\chi(y) \in \Real^d$ denotes the so-called cell
solution, which satisfies
\begin{equation}
  \label{eq:cell_problem}
  \bnabla \cdot( a(y) \bnabla \boldsymbol \chi(y)) = - \nabla_y a(y) \,
\end{equation}
and is defined to have zero average. In \cite{paper1}, error bounds
for the difference between the solutions to \cref{eq:prob} and
\cref{eq:hom} are proved under certain regularity assumptions. In
particular,
we have the following result for periodic problems.

\begin{theorem}\label{thm:paper1}
  Given a fixed final time $T$, assume that
  $\M^\varepsilon \in C^1([0, T]; H^{2}(\Omega))$ is a classical
  solution to \cref{eq:prob} and that there is a constant $K$
  independent of $\varepsilon$ such that
  $\|\bnabla \M^\varepsilon(\cdot, t)\|_{L^\infty} \le K$ for all
  $t \in [0, T]$. Suppose that
  $\M_0 \in C^\infty(0, T; H^{\infty}(\Omega))$ is a classical
  solution to \cref{eq:hom} and that the assumptions (A1)-(A3) are
  satisfied.
  We then have for $0 \le t \le T$,
  \begin{align}\label{eq:main_l2_m0}
  \|\M^\varepsilon(\cdot, t) - \M_0(\cdot, t)\|_{L^2} \le C \varepsilon\,,
%  \qquad 0 \le t \le T_0\,,
  \end{align}
  %%%%%%%%%%%%%%%%%%%
where the constant $C$ is independent of $\varepsilon$ and $t$ but
depends on $K$ and $T$.
\end{theorem}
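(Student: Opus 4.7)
The natural strategy is a two-scale asymptotic expansion combined with a stability/energy estimate. I would set up the formal ansatz
\begin{equation*}
\M^\varepsilon_{\mathrm{app}}(x,t) := \M_0(x,t) + \varepsilon\,\M_1(x, x/\varepsilon, t),
\qquad \M_1(x,y,t) := \bnabla \M_0(x,t)\,\boldsymbol\chi(y),
\end{equation*}
where $\boldsymbol\chi$ is the cell solution from \cref{eq:cell_problem}. The first step is to plug $\M^\varepsilon_{\mathrm{app}}$ into the operator $\bnabla\cdot(a^\varepsilon \bnabla \cdot)$, use the chain rule $\bnabla = \bnabla_x + \varepsilon^{-1}\bnabla_y$ with $y=x/\varepsilon$, and collect terms order by order in $\varepsilon$. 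The $\varepsilon^{-1}$ terms vanish by the definition of $\boldsymbol\chi$, the $\varepsilon^0$ terms combine to the homogenized operator $\bnabla\cdot(\bnabla\M_0 \A^H)$ (after averaging in $y$, giving \cref{eq:AH}), and the remaining higher-order pieces produce a residual $\boldsymbol r^\varepsilon$ of size $\mathcal{O}(\varepsilon)$ in, say, $L^\infty_t L^2_x$, provided $\M_0$ has the smoothness assumed in the hypothesis and $a$, $\boldsymbol\chi$ are smooth. At this stage I would treat the two nonlinear cross-product terms separately: for each, expand $\M^\varepsilon_{\mathrm{app}}\times(\cdot)$ and $\M^\varepsilon_{\mathrm{app}}\times\M^\varepsilon_{\mathrm{app}}\times(\cdot)$, subtract what is needed to recover \cref{eq:hom}, and check that the mismatch is again $\mathcal{O}(\varepsilon)$.

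Next I would introduce the error $\e := \M^\varepsilon - \M^\varepsilon_{\mathrm{app}}$ and derive its evolution equation by subtracting the equation for $\M^\varepsilon_{\mathrm{app}}$ (which solves \cref{eq:prob} up to $\boldsymbol r^\varepsilon$) from the equation for $\M^\varepsilon$. The equation for $\e$ is of the schematic form
\begin{equation*}
\partial_t \e = \mathcal{L}^\varepsilon(\e) + \mathcal{N}^\varepsilon(\e,\M^\varepsilon,\M^\varepsilon_{\mathrm{app}}) + \boldsymbol r^\varepsilon,
\end{equation*}
where $\mathcal{L}^\varepsilon$ is the linearization of the Landau–Lifshitz operator about $\M^\varepsilon_{\mathrm{app}}$ and $\mathcal{N}^\varepsilon$ collects quadratic and cubic remainders in $\e$. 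I would test with $\e$ in $L^2$ and integrate by parts on the diffusion terms. Here the key algebraic facts are that $\boldsymbol u\times\boldsymbol u=0$ and the Lagrange-type identity $\boldsymbol u\times\boldsymbol u\times\boldsymbol v = (\boldsymbol u\cdot\boldsymbol v)\boldsymbol u - |\boldsymbol u|^2\boldsymbol v$, together with $|\M^\varepsilon|=1$ from (A3); these let me extract a dissipative term from the $\alpha$-part that controls the gradient of $\e$, while the conservative Gilbert-type term contributes only boundary-less skew-symmetric pieces after integration by parts with periodic conditions.

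The heart of the estimate is then to bound the remaining terms by $C(\|\e\|_{L^2}^2 + \|\bnabla\e\|_{L^2}^2) + C\varepsilon^2$, using (A1) for ellipticity of $a^\varepsilon$, the uniform bound $\|\bnabla\M^\varepsilon\|_{L^\infty}\le K$ for the nonlinear factors, and the smoothness of $\M_0$ and $\boldsymbol\chi$ for the residual. Gronwall's inequality then yields $\|\e(\cdot,t)\|_{L^2}\le C\varepsilon$ on $[0,T]$, and since $\|\varepsilon\M_1(\cdot,\cdot/\varepsilon,t)\|_{L^2}\lesssim \varepsilon$ automatically, the triangle inequality gives \cref{eq:main_l2_m0}.

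The main obstacle I anticipate is handling the nonlinear cross-product structure at the energy level: unlike in linear elliptic/parabolic homogenization, one cannot simply test with $\e$ and read off coercivity. The care needed is (i) to exploit the specific double-cross-product identity so that the damping term actually dominates the Gilbert term after integration by parts, and (ii) to keep the $L^\infty$ gradient bound on $\M^\varepsilon$ (and the fact that $|\M^\varepsilon|=|\M_0|=1$ up to $\mathcal O(\varepsilon)$) available throughout, because otherwise the cubic terms in $\e$ are not absorbable. A secondary technical point is that $\M^\varepsilon_{\mathrm{app}}$ itself has $|\M^\varepsilon_{\mathrm{app}}|=1$ only up to $\mathcal O(\varepsilon)$, so one may need either a further $\varepsilon^2$ corrector or a projection step to keep the algebraic identities clean; these cost an additional $\mathcal O(\varepsilon)$ term in $\boldsymbol r^\varepsilon$ which is still compatible with the claimed rate.
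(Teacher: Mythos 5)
First, a point of reference: this paper does not prove \Cref{thm:paper1} at all; the result is imported verbatim from \cite{paper1}, so there is no in-paper proof to compare against. Your outline (two-scale ansatz with the cell-problem corrector, residual estimate, error equation, $L^2$ energy estimate exploiting the cross-product structure and the damping term, Gronwall) is the same general strategy as the cited source, so in spirit you are on the right track.

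However, there is a genuine gap in the residual step. With only the first-order corrector $\varepsilon\,\M_1 = \varepsilon\,\bnabla\M_0\,\boldsymbol\chi(x/\varepsilon)$, the $\mathcal{O}(\varepsilon^{-1})$ terms cancel by \cref{eq:cell_problem}, but the $\mathcal{O}(1)$ terms do \emph{not} reduce to the homogenized operator pointwise: they equal it only after averaging in the fast variable, and the leftover is an $\mathcal{O}(1)$ zero-mean oscillatory function, not an $\mathcal{O}(\varepsilon)$ residual in $L^\infty_t L^2_x$ as you claim. Already in linear parabolic/elliptic homogenization one needs an additional device (a second-order corrector, or the skew-symmetric flux-corrector construction) to convert these zero-mean terms into $\varepsilon$ times something bounded; for \cref{eq:prob} the situation is worse, because the fast temporal oscillations of $\M^\varepsilon$ live on the scale $\varepsilon^2$, so the expansion must also carry correctors in the fast time variable (and respect the unit-length constraint to the appropriate order) before the residual is genuinely $\mathcal{O}(\varepsilon)$. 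A second, smaller soft spot is the energy estimate: terms such as $\e\times\bnabla\cdot(a^\varepsilon\bnabla\M^\varepsilon)$ cannot be bounded directly, since $\|\M^\varepsilon\|_{H^2}$ is not uniform in $\varepsilon$; they must be integrated by parts so that only first derivatives of $\M^\varepsilon$ appear, which is precisely where the assumed bound $\|\bnabla\M^\varepsilon\|_{L^\infty}\le K$ enters. You gesture at both issues, but as written the claimed $\mathcal{O}(\varepsilon)$ residual from the first corrector alone is a step that would fail, and repairing it is where most of the actual work in \cite{paper1} lies.
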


Note that it is easy to show that
$\|\bnabla \M^\varepsilon(\cdot, t)\|_{L^2} \le K$ independent of
$\varepsilon$, as is for example shown in \cite[Appendix B]{paper1}.
Numerically one can check that the same also holds for
$\|\bnabla \M^\varepsilon(\cdot, t)\|_{L^\infty}$.

% Note that for shorter, $\varepsilon$-dependent times, further
% estimates are given in \cite {paper1}.

\subsection{Heterogeneous Multiscale Methods for the Landau-Lifshitz equation}
%
%%%%%%%%%%%%%%%%%%%%%%%%%%%%%%%%%%%%%%%%%%%%%%%%%%%%%%%%
%
Heterogeneous Multiscale Methods are a well-established framework
for dealing with multiscale problems with scale separation, that
involve fast scale oscillations which make it computationally infeasible
to properly resolve the problem throughout the whole domain. First
introduced by E and Engquist \cite{weinan2003}, they have since then
been applied to problems from many different areas \cite{acta_numerica, hmm1}.

The general idea of HMM is to approximate the effective solution to
the given problem using a coarse scale macro model that is missing
some data and is thus incomplete. It is combined with an accurate
micro model resolving the fast oscillations in the problem, coupled
to the macro solution via the micro initial data. The micro problem
is only solved on a small domain around each discrete macro location
to keep the computational cost low.  The thereby obtained solution
is then averaged and provides the information necessary to complete
the macro model \cite{weinan2003, acta_numerica, hmm1}.

  %%%%%%%%%%%%%%%%%%%%%%%%%%%%%%%%%%%%%%%%%%%%%%%%%%%%%%%%%%%

Since HMM approximates the effective solution to a multiscale
problem rather than resolving the fast scales, some error is
introduced. In case of the Landau-Lifshitz problem \cref{eq:prob}
with a periodic material coefficient, the effective solution
corresponding to $\M^\varepsilon$ is $\M_0$ satisfying
\cref{eq:hom}. It hence follows from \Cref{thm:paper1} that the
$L^2$-error between the HMM solution and $\M^\varepsilon$ in the
periodic case is always at least $\mathcal{O}(\varepsilon)$.

%%%%%%%%%%%%%%%%%%%%%%%%%%%%%%%%%%%%%%%%%%%%%%%%%%%%%%%%%%%%
There are several different HMM models one could choose for the
problem \cref{eq:prob}. Three possibilities are discussed in
\cite{paper2}, flux, field and torque model. All three are based on
the same micro model, the full Landau-Lifshitz equation
\cref{eq:prob} which is solved on a time interval $[0, \eta]$, where
$\eta \sim \varepsilon^2$. In \cite{paper1}, it is shown that this
is the scale of the fast temporal oscillations in the problem.
Hence, the micro model is to find $\m^\varepsilon(x, t)$ for
$0 \le t \le \eta$ such that
\begin{subequations}\label{eq:micro}
\begin{align}
  \partial_t \m^\varepsilon &= - \m^\varepsilon \times \bnabla \cdot(a^\varepsilon \bnabla \m^\varepsilon) - \alpha \m^\varepsilon \times \m^\varepsilon \times \bnabla \cdot(a^\varepsilon \bnabla \m^\varepsilon), \\
  \m^\varepsilon(x, 0) &= \m_\mathrm{init}(x) = \Pi^k \M(\cdot, t_j).
\end{align}
\end{subequations}
The initial data for the micro problem is based on an interpolation
of the current macro state $\M$, here denoted by $\Pi^k$, which is
explained in more detail in \Cref{sec:initial}.  In \cite{paper2},
it is assumed that \cref{eq:micro} holds for $x \in \Omega$ with
periodic boundary conditions to simplify the analysis. In practice,
one must only solve \cref{eq:micro} for $x \in [-\mu', \mu']^d$ to keep down the computational cost.
Here $\mu' \sim \varepsilon$, since this is the scale of the fast
spatial oscillations in $\m^\varepsilon$. To do this, we have to add
artificial boundary conditions which introduce some error as
discussed in \Cref{sec:bc,sec:micro_size}.

The three different macro models considered in \cite{paper2} have
the general structure of \cref{eq:prob,eq:hom} but involve
different unknown quantities which have to be obtained by averaging
the corresponding data from the micro model \cref{eq:micro}. In the field model,
we have
\begin{subequations}\label{eq:macro}
  \begin{align}
    \partial_t \M &= - \M \times \H_\mathrm{avg}(x, t; \M) - \alpha \M \times \M \times \H_\mathrm{avg}(x, t; \M), \label{eq:macro_field}\\
    \M(x, 0) &= \M_\mathrm{init},
  \end{align}
\end{subequations}
where $\H_\mathrm{avg}(x, t; \M)$ denotes the unknown quantity. In
the periodic case, this quantity approximates
$\bnabla \cdot (\bnabla \m \A^H)$.

In the flux model, \cref{eq:macro_field} is replaced by
  \begin{align}\label{eq:macro_flux}
    \partial_t \M &= - \M \times \bnabla \cdot \F_\mathrm{avg}(x, t; \M) - \alpha \M \times \M \times \F_\mathrm{avg}(x, t; \M),
  \end{align}
  and in the torque model, we instead have
 \begin{align}\label{eq:macro_torque}
    \partial_t \M &= - \T_\mathrm{avg}(x, t; \M) - \alpha \M \times \T_\mathrm{avg}(x, t; \M),
  \end{align}
  where in the periodic case, $\F_\mathrm{avg}$ and
  $\T_\mathrm{avg}$ are approximations to $\bnabla \m \A^H$ and
  $\m \times \bnabla \cdot (\bnabla \m \A^H)$, respectively.  As
  shown in \cite{paper2}, the error introduced when approximating the
  respective quantities by an averaging procedure, the so-called
  upscaling error, is bounded rather similarly for all three models,
  with somewhat lower errors in the flux model. This does not give a
  strong incentive to choose one of the models over the others. In
  this paper, we therefore focus on the field model for the following reasons,
  not related to the upscaling error.

  First, when choosing the flux model, the components of the flux
  should be approximated at different grid locations to reduce the
  approximation error in the divergence that has to be computed on
  the macro scale, which typically has a rather coarse
  discretization. This implies that we need to run separate micro
  problems for each component of the gradient. This is not necessary
  when using the field model.

  Second, it is seen as an important aspect of micromagnetic
  algorithms that the norm preservation property of the continuous
  Landau-Lifshitz equation is mimicked by time integrators for the
  discretized problem. This is usually achieved by making use of the
  cross product structure in the equation. However, when choosing
  the torque model \cref{eq:macro_torque}, there is no cross product
  in the first term of the macro model.

  The chosen HMM macro model, \cref{eq:macro}, is discretized on a
  coarse grid in space with grid spacing $\Delta X$ and points
  $x_i = x_0 + i \Delta X$, where $i$ is a $d$-dimensional
  multi-index ranging from 0 to $N$ in each coordinate direction.
  The corresponding semi-discrete magnetization values are $\M_i(t) \approx \M(x_i, t)$, which
  satisfy the semi-discrete equation
  \begin{subequations}\label{eq:macro_disc}
    \begin{align}
      \partial_t \M_i &= - \M_i \times \H_\mathrm{avg}(x_i, t;  \bar \M) - \alpha \M_i \times \M_i \times \H_\mathrm{avg}(x_i, t;  \bar \M), \\
      \M_i(0) &= \M_\mathrm{init}(x_i),
    \end{align}
  \end{subequations}
  where $ \bar \M$ denotes the vector containing all
  the $\M_i$, $i \in \{0, ...,  N\}^d$.  The notation
  $\H_\mathrm{avg}(x_i, t;  \bar \M)$ represents the
  dependence of $\H_\mathrm{avg}$ at location $x_i$ on several values of the discrete
  magnetization at time $t$.  To
  discretize \cref{eq:macro_disc} in time, we introduce
  $t_j = t_0 + j \Delta t$, for $j = 0, ..., M$.  The specific form
  of time discretization of \cref{eq:macro_disc} is discussed in
  \Cref{sec:ts}.

  \subsection{Upscaling}\label{sec:upscaling}
  To approximate the unknown quantity $\H_\mathrm{avg}$ in
  \cref{eq:macro_disc} in an efficient way and to control how fast the
  approximation converges to the corresponding effective quantity,
  we use averaging involving kernels as introduced in \cite{stiff,
    doghonay1}.

\begin{definition}[\cite{doghonay1,  paper2}]\label{def:kernel}
A function $K$ is in the space of smoothing kernels $\mathbb{K}^{p, q}$ if
\begin{enumerate}
\item $K \in C_c^{q}([-1, 1])$ and  $K^{(q+1)} \in BV(\Real)$ .
\item $K$ has $p$ vanishing moments,
  \[\int_{-1}^1 K(x) x^r dx =
  \begin{cases}
    1\,, & r = 0\,,\\
    0\,, & 1 \le r \le p\,.
  \end{cases}\]
\end{enumerate}
If additionally $K(x) = 0$ for $x \le 0$ then $K \in \mathbb{K}_0^{p, q}$.
\end{definition}

We use the conventions that $K_\mu$ denotes a scaled version of the kernel $K$,
\[K_\mu(x) := {1}/{\mu} K(x/\mu),\]
and that in space dimensions with $d > 1$,
\[K(x) := K(x_1) \cdots K(x_d).\]
For the given problem, we choose a kernel
$K \in \mathbb{K}^{p_x, q_x}$ for the spatial and
$K^0 \in \mathbb{K}^{p_t, q_t}_0$ for the temporal averaging due to the
fact that \cref{eq:micro} cannot be solved backward in time. The
particular upscaling procedure at time $t_j$ is then given by
\begin{align}\label{eq:upscaling}
  \H_\mathrm{avg}(x_i, t_j; \bar \M) = \int_0^\eta \int_{\Omega_\mu} K_\mu(x) K_\eta^0(t) \bnabla \cdot (a^\varepsilon \bnabla \m^\varepsilon) dx dt,
\end{align}
where $\Omega_\mu := [-\mu, \mu]^d$ for a parameter
$\mu \sim \varepsilon$ such that $\mu \le \mu'$, the averaging
domain is a subset of the domain that the micro problem is solved
on. The micro solution $\m^\varepsilon$ is obtained solving
\cref{eq:micro} on $[-\mu', \mu']^d \times [0, \eta]$, with initial
data $\m_\mathrm{init}$ based on $\bar \M$ at time $t_j$ and around
the discrete location $x_i$.  A second order central difference
scheme in space is usually sufficient to obtain an approximation to $\m^\varepsilon$
with errors that are low compared to the averaging errors at a
relatively low computational cost.

Assuming instead that the micro problem is solved on
$\Omega \times [0, \eta]$, we have the following estimate for the
upscaling error for the case of a periodic material coefficient that
is proved in \cite{paper2}.

\begin{theorem}\label{thm:paper2}
  Assume that (A1)-(A2) hold and the micro initial data $\m_\mathrm{init}$ is such that it satisfies
  (A3).  Let  $\varepsilon^2 < \eta \le \varepsilon^{3/2}$ and suppose that for
  $x \in \Omega$ and $0 \le t \le \eta$, the exact solution to the
  micro problem \cref{eq:micro} is
  $\m^\varepsilon(x, t) \in C^1([0, \eta]; H^{2}(\Omega))$ and that
  there is a constant $c$ independent of $\varepsilon$ such that
  $\|\bnabla \m^\varepsilon(\cdot, t)\|_{L^\infty} \le c$.  The
  solution to the corresponding homogenized problem is
  $\m_0 \in C^\infty(0, \eta, H^\infty(\Omega))$.
  Moreover, consider averaging kernels   $K \in \mathbb{K}^{p_x, q_x}$ and
  $K^0 \in \mathbb{K}^{p_t, q_t}_0$ and let  $\varepsilon < \mu < 1$.
  Then
  \begin{align*}
    &\left| \H_\mathrm{avg} - \bnabla \cdot (\bnabla \m_\mathrm{init}(0) \A^H) \right|
      =: E_\varepsilon + E_\mu + E_\eta,
  \end{align*}
  where
  \begin{align}\label{eq:err_terms}
    E_\varepsilon \le C \varepsilon, \quad E_\mu \le C \left(\mu^{p_x +1} +  \left(\frac{\varepsilon}{\mu}\right)^{q_x + 2}\right) \quad \text{and} \quad E_\eta \le C \left( \eta^{p_t + 1} + \frac{1}{{\mu}}  \left(\frac{\varepsilon^2}{ \eta}\right)^{q_t+1}\right).
  \end{align}
  In all cases, the constant $C$ is independent of
  $\varepsilon$, $\mu$ and $\eta$ but might depend on $K$, $K^0$ and
  $\alpha$.
\end{theorem}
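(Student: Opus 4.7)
The plan is to introduce the intermediate quantity
\[
\H_0 := \int_0^\eta \int_{\Omega_\mu} K_\mu(x)\, K^0_\eta(t)\, \bnabla \cdot (\bnabla \m_0(x,t)\, \A^H)\, dx\, dt,
\]
where $\m_0$ denotes the homogenized counterpart of $\m^\varepsilon$ (with the same initial data $\m_\mathrm{init}$), and to decompose
\[
\H_\mathrm{avg} - \bnabla \cdot(\bnabla \m_\mathrm{init}(0) \A^H) = (\H_\mathrm{avg} - \H_0) + (\H_0 - \bnabla \cdot(\bnabla \m_\mathrm{init}(0)\A^H)).
\]
The first difference will be identified with $E_\varepsilon$, and the second — an averaging of a quantity that is smooth uniformly in $\varepsilon$ — is split further into the spatial contribution $E_\mu$ and the temporal contribution $E_\eta$. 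This mirrors the standard three-term decomposition employed in the kernel HMM literature.

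For $E_\varepsilon$, I would integrate by parts in $x$ (using that $K_\mu$ is compactly supported in $\Omega_\mu$) to convert the problem to estimating
\[
\int_0^\eta \int_{\Omega_\mu} \bnabla K_\mu \cdot K^0_\eta\, \bigl(a^\varepsilon \bnabla \m^\varepsilon - \bnabla \m_0\, \A^H\bigr)\, dx\, dt.
\]
The first-order corrector ansatz $\m^\varepsilon \approx \m_0 + \varepsilon\, \boldsymbol\chi(x/\varepsilon) \cdot \bnabla \m_0$, together with the cell problem \cref{eq:cell_problem} and the definition \cref{eq:AH}, shows that $a^\varepsilon \bnabla \m^\varepsilon - \bnabla \m_0\, \A^H$ is, modulo an $O(\varepsilon)$ strong remainder, a mean-zero $\varepsilon$-periodic oscillation whose $L^2$-pairing against any smooth test function is $O(\varepsilon)$. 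Since $\bnabla K_\mu \cdot K^0_\eta$ is smooth with $L^1$ norm in $x$ that is $O(1)$, this yields $E_\varepsilon \le C\varepsilon$, essentially the localized analogue of \Cref{thm:paper1}.

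For $E_\mu$ and $E_\eta$, set $g(x,t) := \bnabla \cdot(\bnabla \m_0(x,t)\A^H)$, which is smooth uniformly in $\varepsilon$, and separate the remaining error into its purely spatial and purely temporal parts. Taylor-expanding $g(\cdot,t)$ around $x=0$ and invoking the $p_x$ vanishing moments of $K$ in \Cref{def:kernel} produces the residual $\mu^{p_x+1}$; the complementary $(\varepsilon/\mu)^{q_x+2}$ piece accounts for $\varepsilon$-scale spatial oscillations remaining in $\m^\varepsilon$ and is obtained by $q_x+1$ repeated integrations by parts exploiting $K^{(q_x+1)}\in BV$, exactly as in the abstract kernel-averaging lemma of \cite{doghonay1, paper2}. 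The temporal bound is analogous: the $p_t$ vanishing moments of $K^0$ give the $\eta^{p_t+1}$ term, while the fast $\varepsilon^2$-scale time oscillations of $\m^\varepsilon$ established in \cite{paper1} are smoothed by $K^0$ with residual $(\varepsilon^2/\eta)^{q_t+1}$ times their amplitude. The extra $\mu^{-1}$ prefactor in $E_\eta$ comes from the fact that these temporal oscillations live inside $\bnabla \cdot(a^\varepsilon \bnabla\,\cdot\,)$: moving one spatial derivative onto $K_\mu$ via integration by parts costs a factor $\mu^{-1}$.

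The hardest step, which I expect to absorb most of the technical effort, is the $E_\eta$ bound: one must simultaneously average in $x$ and $t$, track how the $\varepsilon$-scale spatial and $\varepsilon^2$-scale temporal oscillations couple, and carefully justify the amplitude/$\mu$-scaling of the fast temporal contribution. In contrast, $E_\varepsilon$ is a routine corrector argument once the integration by parts is in place, and $E_\mu$ is a direct application of the abstract kernel smoothing lemma; the remaining technical nuisance is the bookkeeping that keeps the corrector remainders strictly smaller than the $\varepsilon$-scale benchmark $E_\varepsilon$ throughout.
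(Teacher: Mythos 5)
Note first that this paper does not actually prove \Cref{thm:paper2}: the result is imported from the companion work \cite{paper2} (``\ldots that is proved in \cite{paper2}''), so there is no in-paper proof to compare your sketch against; it can only be judged against the statement and the surrounding discussion. Your list of ingredients is the expected one --- corrector expansion from \cite{paper1}, vanishing moments of $K$ and $K^0$ for the $\mu^{p_x+1}$ and $\eta^{p_t+1}$ terms, kernel smoothing of the fast oscillations for the remaining terms, and a spatial integration by parts against $K_\mu$ to explain the factor $1/\mu$ in $E_\eta$.

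There is, however, a genuine inconsistency in the decomposition you announce. If $\H_0$ is the kernel average of $\bnabla \cdot (\bnabla \m_0 \A^H)$, then the second difference $\H_0 - \bnabla\cdot(\bnabla \m_\mathrm{init}(0)\A^H)$ involves only the smooth, $\varepsilon$-independent homogenized field and can only yield the moment terms $\mu^{p_x+1}+\eta^{p_t+1}$; it cannot produce $(\varepsilon/\mu)^{q_x+2}$ or $\mu^{-1}(\varepsilon^2/\eta)^{q_t+1}$. Those two terms come from kernel-averaging the $\varepsilon$-scale spatial and $\varepsilon^2$-scale temporal oscillations of $\m^\varepsilon$, which in your splitting sit entirely inside the first difference $\H_\mathrm{avg}-\H_0$, so that difference is \emph{not} bounded by $C\varepsilon$ and identifying it with $E_\varepsilon$ is wrong (for $\mu$ a few multiples of $\varepsilon$, the spatial oscillation contribution is $O(1)$, not $O(\varepsilon)$); you effectively concede this when you later attribute exactly those terms to oscillations ``remaining in $\m^\varepsilon$''. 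The repair is to insert the corrector/oscillation description of $\m^\varepsilon$ inside $\H_\mathrm{avg}$ itself and split the flux there: a smooth part giving the moment terms, a zero-mean $\varepsilon$-periodic spatial part giving $(\varepsilon/\mu)^{q_x+2}$, a fast temporal part giving $\mu^{-1}(\varepsilon^2/\eta)^{q_t+1}$ (these oscillations are damped and not periodic, which is why only $q_t+1$ powers are gained in time and why the constant depends on $\alpha$ --- your ``exactly as in the abstract kernel lemma'' glosses over this), and an $O(\varepsilon)$ remainder constituting $E_\varepsilon$. A further slip: $\|\bnabla K_\mu\|_{L^1}\sim \mu^{-1}$, not $O(1)$, so the quick pairing argument you give for $E_\varepsilon$ as written only yields $C\varepsilon/\mu$ and needs a more careful treatment to reach $C\varepsilon$.
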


As discussed in \cite{paper2}, for periodic problems we in practice
often observe $E_\varepsilon = \mathcal{O}(\varepsilon^2)$ rather
than the more pessimistic estimate in the theorem.

Two things are important to note here.  First, \Cref{thm:paper2}
states that $\H_\mathrm{avg}(x_i, t_j; \bar \M)$ approximates the
solution to the corresponding effective quantity involving the micro
scale initial data $\m_\mathrm{init}$, not the exact macro solution
$\M$.  We therefore have to require that
\begin{align}\label{eq:consistency_cond}
  \partial_x^\beta \m_\mathrm{init}(0) = \partial_x^\beta \M(x_i, t_j)
\end{align}
for a multi-index $\beta$ with $|\beta| = 2$ to get an estimate for
the actual upscaling error.  Bearing in mind a somewhat more general
scenario with non-periodic material coefficient where $\A^H$ no
longer is constant, we subsequently
require \cref{eq:consistency_cond} to hold for $|\beta| \le 2$.

Moreover, the quantity that $\H_\mathrm{avg}$ in \Cref{thm:paper2}
approximates is independent of $\alpha$.  We can thus choose a
different damping parameter in the micro problem than in the macro
one to optimize the constants in \cref{eq:err_terms}. Typically, it is
favorable to have higher damping in the micro problem
\cref{eq:micro}, as is discussed in the following sections. This can
be seen as an introduction of artificial damping to improve
numerical properties as is common in for example hyperbolic
problems.

\subsection{Example problems}
Throughout this article, we use three different periodic example problems to
illustrate the behavior of the different HMM components and
numerical methods under discussion, one 1D example and two 2D examples.
Further, non-periodic examples are discussed in \Cref{sec:num_ex}.
\begin{itemize}
\item [(EX1)]
For the 1D example, the initial data is chosen to be
 \begin{align*}
  \M_\mathrm{init}(x) = \tilde \M(x)/|\tilde \M(x)|, \qquad \tilde \M(x) =
  \begin{bmatrix}
    0.5+\exp(-0.1\cos(2\pi(x-0.32))) \\
    0.5+\exp(-0.2\cos(2\pi x)) \\
    0.5+\exp(-0.1\cos(2\pi(x-0.75)))
  \end{bmatrix}
\end{align*}
and the material coefficient we consider is
$a^\varepsilon(x) = a(x/\varepsilon)$ where
\[a(x) = 1 + 0.5\sin(2 \pi x).\] The corresponding homogenized
coefficient, which is not used in the HMM approach but as a
reference solution, is
$\A^H = \left(\int_0^1 1/a(x) dx \right)^{-1} \approx 0.866$.  In
\Cref{fig:ex1_sol} the solution $\M^\varepsilon(x, T)$ at $T = 0.1$
and the corresponding HMM approximation $\M(x, T)$ computed on a
grid with $\Delta X = 1/24$ are shown. Note that the HMM approximation agrees
very well with $\M^\varepsilon$.

\begin{figure}[H]
  \centering
  \includegraphics[width=.8\textwidth]{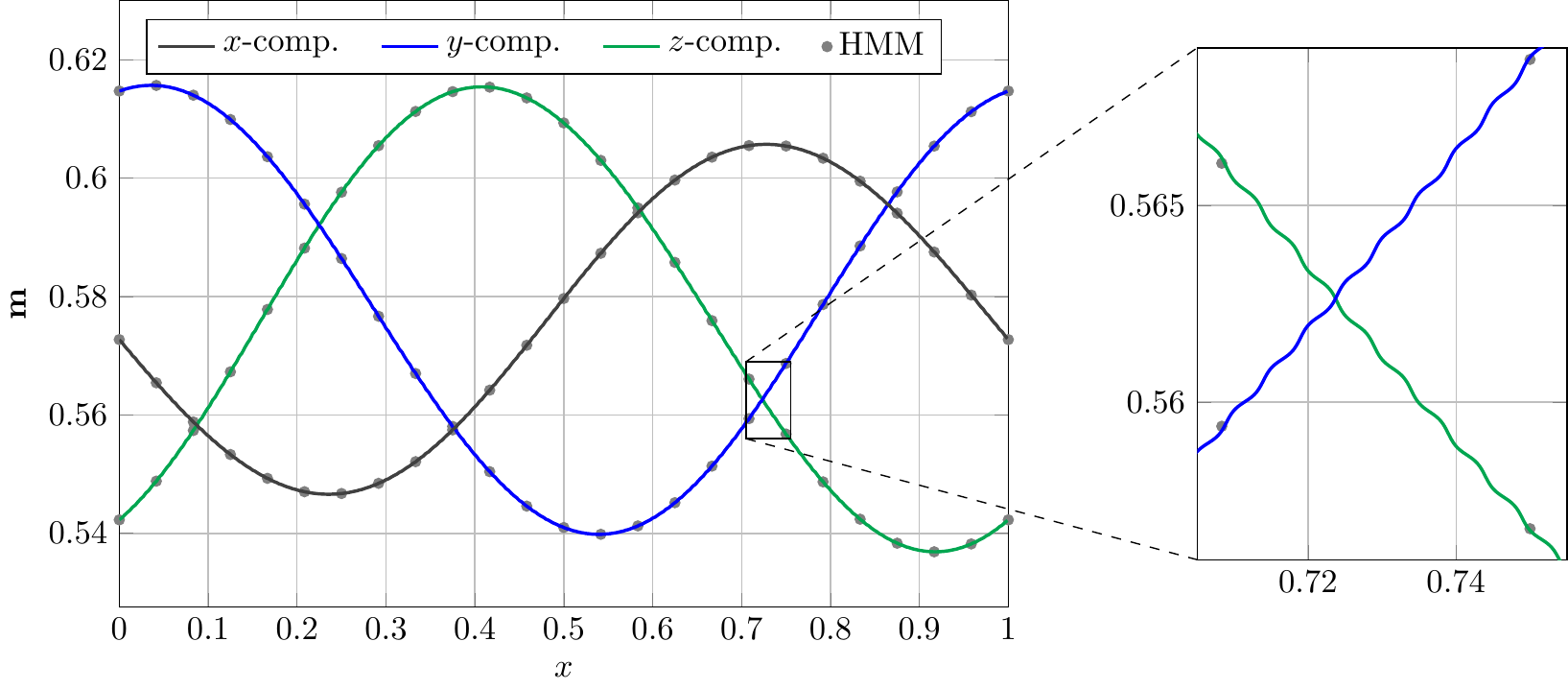}
  \caption{Solution $\m^\varepsilon$ and corresponding HMM
    approximation to \cref{eq:LL} with setup (EX1) at time $T = 0.1$
    when $\varepsilon = 1/200$.}
  \label{fig:ex1_sol}
\end{figure}

\item [(EX2)] For the first 2D example the initial data is
 \begin{align*}
     \m_\mathrm{init}(x) &= \tilde \m(x)/|\tilde \m(x)|, \\
     \tilde \m(x) &=
  \begin{bmatrix}
    0.6+\exp(-0.3(\cos(2\pi(x_1-0.25)) + \cos(2\pi(x_2-0.12)))) \\
    0.5+\exp(-0.4(\cos(2\pi x_1) + \cos(2\pi (x_2-0.4))))  \\
    0.4+\exp(-0.2(\cos(2\pi(x_1-0.81)) + \cos(2\pi(x_2 - 0.73))))
  \end{bmatrix},
 \end{align*}
 which is shown in \cref{fig:init_2D}.
 The material coefficient is given by
   \begin{align*}
     a(x) &= 0.5 + (0.5 + 0.25\sin(2 \pi x_1))(0.5 + 0.25\sin(2 \pi x_2)) \\
		&\qquad + 0.25 (\cos(2 \pi (x_1-x_2)) + \sin(2 \pi x_1)),
   \end{align*}
   which corresponds to a homogenized coefficient with non-zero off-diagonal elements,
   \[\A^H \approx
     \begin{bmatrix}
       0.617&   0.026 \\
       0.026&   0.715
     \end{bmatrix}.\]
   For this example, $\A^H$ has to be computed numerically (with high precision).

   \begin{figure}[h]
  \centering
  \includegraphics[width=\textwidth]{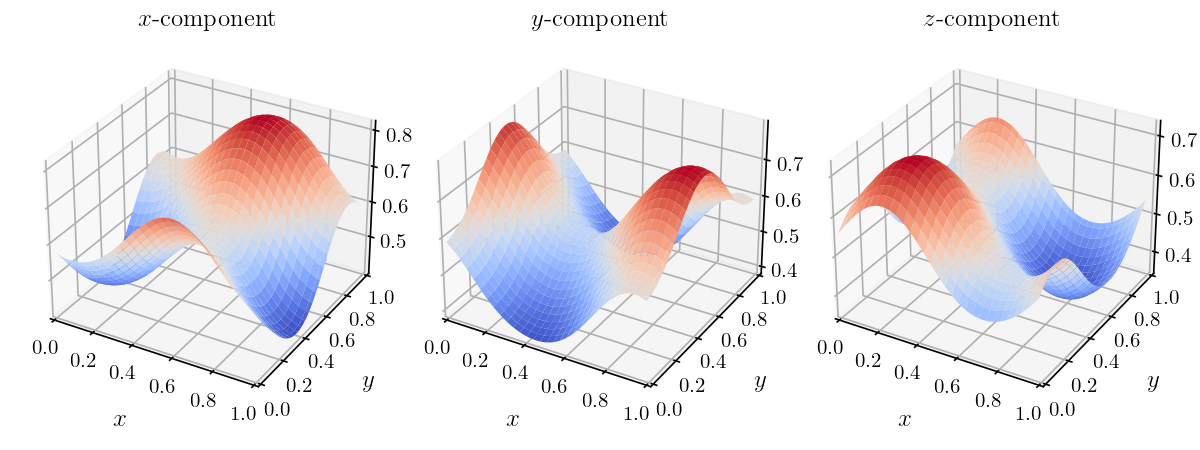}
  \caption{Initial data $\M_\mathrm{init}$ for the 2D problems}
  \label{fig:init_2D}
\end{figure}

 \item [(EX3)] The second 2D example has the same initial data as
   (EX2) but a different material coefficient,
 \[a(x) = (1.1 + 0.5\sin(2\pi x_1))(1.1 + 0.5\sin(2 \pi
     x_2)).\]
   The corresponding homogenized matrix can be computed
   analytically \cite{doghonay2} and takes the value
   \[\A^H = 1.1 \sqrt{1.1^2 - 0.25} \I.\]

\end{itemize}
In all three cases, it holds that $a^\varepsilon(x) = a(x/\varepsilon)$.

\section{Time stepping for Landau-Lifshitz problems}\label{sec:ts}

A variety of different methods for time integration of the
Landau-Lifshitz equation in a finite difference setting are
available, as for example discussed in the review articles
\cite{garcia_review, cimrak2007survey, llg_lncs}.  Most of these
methods can be characterized as either projection methods or
geometric integrators, typically based on the implicit midpoint
method. In a projection method, the basic update procedure does not
preserve the length of the magnetization vector moments which makes
it necessary to project the intermediate result back to the unit
sphere at the end of each time step.  Commonly used examples for
this kind of methods are projection versions of Runge-Kutta methods
as well as the Gauss-Sequel projection method \cite{gspm,
  gspm_imp}. Furthermore, an implicit projection method based on a
linear update formula is proposed in \cite{weinan_LL}.

The most common geometric integrator is the implicit midpoint
method, which is both norm preserving and in case of no damping,
$\alpha = 0$, also energy conserving
\cite{dAquino2005geometrical, dAquino2005numerical}. However, since
it is computationally rather expensive, several semi-implicit
variations have been proposed, in particular SIA and SIB introduced
in \cite{mentink} as well as the midpoint extrapolation method, MPE,
\cite{mpe}. Further geometric integrators are the Cayley transform
based approaches discussed in \cite{lewis_nigam,
  krishnaprasad2001cayley}.

While there are many methods available for time integration of the
Landau-Lifshitz equation, which have advantages in different
scenarios, we here have a strong focus on computational cost,
especially when considering the micro problem, where the subsequent
averaging process reduces the importance of conservation of physical
properties.
For the HMM macro model, the form of the problem, \cref{eq:macro},
prevents the rewriting of the equation as would be necessary for some
integrators, for example the method in \cite{weinan_LL}. In general,
the dependence of $\H_\mathrm{avg}$ in \cref{eq:macro} on the micro
solution makes the use of implicit methods problematic, as is
further discussed in \Cref{sec:macro_ts}.

In the following, we focus on several time integration methods that
might be suitable for the given setup and then motivate our choice
for the macro and micro problem, respectively. %

\subsection{Description of selected methods}\label{sec:description}

The methods we focus on are two projection methods, HeunP and RK4P,
as well as the semi-implicit midpoint extrapolation method, MPE,
introduced in \cite{mpe} and MPEA, an adaption of the latter
method.  We furthermore
include the implicit midpoint method in the considerations since it
can be seen as a reference method for time integration of the
Landau-Lifshitz equation. %A short overview of this method is given in
% the following subsection.  To the author's knowledge, it is
% typically more common to use HeunP (RK2P) rather then RK4P in
% magnetization dynamics, and the adaption of MPE is our own
% proposition.

In this section, we suppose that we work with a discrete grid in
space with locations $x_i = x_0 + i \Delta x$, where
$i \in \{0, ..., N\}^d$, and consider time points $t_j = t_0 + j \Delta t$, $j = 0, ..., M$.
We denote by $\m_i^j \in \Real^3$ an approximation to the
magnetization at location $x_i$ and time $t_j$,
$\m_i^j \approx \m(x_i, t_j)$.  When writing $\m^j$ we refer to a
vector in $\Real^{3N^d}$ that contains all $\m_i^j$.
In the main part of this section, we do not distinguish
between macro and micro problem but focus on the general behavior
of the time stepping methods. Thus, $\m$ can denote both a micro or
macro solution.

We furthermore use the notation $\f_i(\m^j)$ to denote the value of
a function $\f$ at location $x_i$ which might depend on the values
of $\m^j$ at several space locations.  In particular, we write
$\H_i(\m^j)$ to denote a discrete approximation to the  effective field at $x_i$. % for all
% the considered problems.
%
On the macro scale, it thus holds that
$\H_i(\m^j) \approx \H_\mathrm{avg}(x_i, t_j; \bar \M)$ or, when
considering the corresponding homogenized problem in case of a periodic material coefficient,
\[\H_i(\m^j) \approx \bnabla \cdot(\bnabla \m(x_i, t_j) \A^H).\]
 For the micro problem, we have
\[\H_i(\m^j) \approx \bnabla \cdot(a(x_i/\varepsilon) \bnabla
  \m^\varepsilon(x_i, t_j)).\]
The particular form of $\H_i(\m^j)$
does not have a major influence on the following discussions if not
explicitly stated otherwise.

\subsubsection*{HeunP and RK4P}
HeunP and RK4P are the standard Runge Kutta 2 and Runge Kutta 4
methods with an additional projection back to the unit sphere at the
end of every time step.  Let $\f(\m^j)$ be the ${ 3N^d}$-vector such that
\[\f_i(\m^j) := - \m_i^j \times \H_i(\m^j) - \alpha \m_i^j \times \m_i^j \times
  \H_i(\m^j), \qquad i \in \{0, ..., N\}^d.\]
Then in the Runge-Kutta methods, one computes stage values
\begin{align*}
  \k_1 = \f(\m^j), \quad \k_2 = \f(\m^j + \tfrac{\Delta t}{2} \k_1), \quad \k_2 = \f(\m^j + \tfrac{\Delta t}{2} \k_2), \quad \k_4 = \f(\m^j + \Delta t \k_3).
\end{align*}
In HeunP (RK2P), the time step update then is given by
\begin{align}
  \m^{j+1}_i = \tilde \m_i / |\tilde \m_i|, \qquad \text{where} \qquad   \tilde \m = \m^j + \tfrac{\Delta t}{2}(\k_1 + \k_2),
\end{align}
and for RK4P,
\begin{align}
  \m^{j+1}_i = \tilde \m_i / |\tilde \m_i|, \qquad \text{where} \qquad   \tilde \m = \m^j + \tfrac{\Delta t}{6}(\k_1 + 2 \k_2 + 2 \k_3 + \k_4).
\end{align}
HeunP is a second order method and RK4P is fourth order accurate.

%%%%%%%%%%%%%%%%%%%%%%%%%%%%%%%%%%%%%%%%%%%%%%%%%%%%%%%%%%%%%%%%%%%5
\subsubsection*{Implicit midpoint}
Using the implicit midpoint method, the Landau-Lifshitz equation is
discretized as
\begin{equation}
  \label{eq:imp_update}
  \frac{\m_i^{j+1} - \m_i^j}{\Delta t} = - \frac{\m_i^j + \m_i^{j+1}}{2} \times \h_i\left(
     \frac{\m^j + \m^{j+1}}{2}\right),
\end{equation}
where
\begin{equation}
  \label{eq:h_term}
  \h_i(\m) := \H_i(\m) + \alpha \m_i \times \H_i(\m).
\end{equation}
  Hence, the values for $\m_i^{j+1}$ are obtained by solving the nonlinear system
  \begin{equation}
    \label{eq:imp_sys}
    \m^{j+1} = \left(\I + \frac{\Delta t}{2} \left[\h \left( \frac{\m^j + \m^{j+1}}{2}\right)\right]_\times\right)^{-1} \left(\I - \frac{\Delta t}{2} \left[\h \left( \frac{\m^j + \m^{j+1}}{2}\right)\right]_\times\right)\m^j,
  \end{equation}
  where $\left[\h\right]_\times$ is the %a block-diagonal
  matrix such that the matrix-vector product
  $\left[\h\right]_\times \m^j$ corresponds to taking the
  cross products $\h_i \times \m_i^j$ for all $i \in \{0, ...,  N\}^d$.
  The implicit midpoint method is norm-conserving and results in a
  second order accurate approximation.

  However, when using Newton's method to solve the non-linear system
  \cref{eq:imp_sys}, one has to compute the Jacobian of the
  right-hand side with respect to $\m^{j+1}$, a sparse, but not
  (block)-diagonal, ${3N^d \times 3N^d}$ matrix and then solve the
  corresponding linear system in each iteration, which has a rather
  high computational cost.  In case of the HMM macro model, the
  Jacobian cannot be computed analytically since then $\H_i$ in
  \cref{eq:h_term} is replaced by the averaged quantity
  $\H_\mathrm{avg}(x_i, t_j; \m)$, with a dependence on $\m$ that is
  very complicated. A numerical approximation is highly
  expensive since it means solving $C N^{2d}$ additional micro problems
  per time step.

%%%%%%%%%%%%%%%%%%%%%%%%%%%%%%%%%%%%%%%%%%%%%%%%%%%%%%%%%%%%%%5
  \subsubsection*{MPE and MPEA}
  As described in \cite{mpe}, the idea behind the midpoint
  extrapolation method is to approximate
  $\h(\frac{\m^j + \m^{j+1}}{2})$ in \cref{eq:imp_update} using the
  explicit extrapolation formula
  \begin{equation}
    \label{eq:h_mpe}
    \h\left(\frac{\m^j + \m^{j+1}}{2}\right) \approx \h^{j+1/2} := \tfrac{3}{2} \h(\m^j) - \tfrac{1}{2} \h(\m^{j-1}).
  \end{equation}
  The update for $\m_i$ then becomes
  \begin{equation}
  \label{eq:mpe_update}
  \frac{\m_i^{j+1} - \m_i^j}{\Delta t} = - \frac{\m_i^j + \m_i^{j+1}}{2} \times \h_i^{j+1/2}.
\end{equation}
The quantity $\h_i^{j+1/2}$ here is independent of $\m^{j+1}$, which means that the
problem decouples into $N^d$ small $3\times3$ systems.
Since the first term on the right-hand side still contains
$\m_i^{j+1}$, this is considered a semi-implicit method.
Just as the implicit midpoint method, MPE is second order
accurate. We furthermore propose to use third-order accurate
extrapolation,
\begin{equation}
  \label{eq:h_mpea}
  \h^{j+1/2} := \frac{23}{12} \h(\m^j) - \frac{16}{12} \h(\m^{j-1}) + \frac{5}{12} \h(\m^{j-2}),
\end{equation}
in \cref{eq:mpe_update}, which gives MPEA, the adapted MPE method.
As it is based on the implicit midpoint method, MPEA is second order
accurate just like MPE, but has better stability properties for low
damping, as shown in the next section.

As MPE and MPEA are multi-step methods, values for $\m^1$ (and
$\m^2$) are required for startup. These can be obtained using HeunP
or RK4P, as suggested in \cite{mpe}.

\subsubsection*{Comparison of  methods}

In \Cref{fig:ts}, the error with respect to a reference solution
$\m_\mathrm{ref}$ is shown for all the considered (semi-)explicit
methods and two example problems, a 1D problem and a 2D
problem. % The reference solution is
% computed with RK4 and time step size $\Delta t = 10^{-6}$.
Both example problems are homogenized
problems on a rather coarse spatial discretization grid such as we
might have in the HMM macro problem.
For the 1D problem, the implicit midpoint method is also included for reference.
\begin{figure}[ht!]
  \centering
    \includegraphics[width=.9\textwidth]{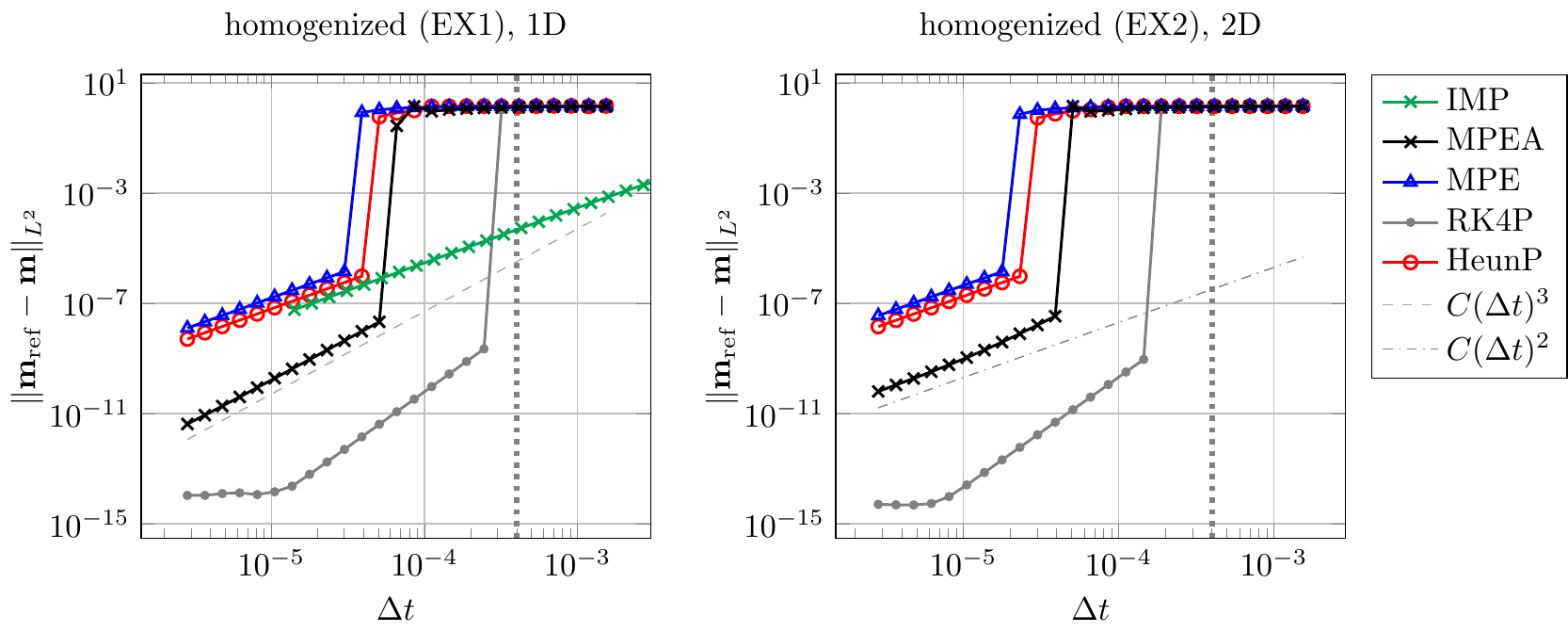}
    \caption{Comparison of $L^2$-error in different time stepping
      methods when varying the time step size $\Delta t$ given a
      fixed spatial discretization with $\Delta x = 1/50$ and
      damping parameter $\alpha = 0.01$.  The dotted vertical line
      corresponds to $\Delta t = (\Delta x)^2$ in each case.}
  \label{fig:ts}
\end{figure}
For small time step sizes, one can observe the expected convergence
rates for HeunP, RK4P and MPE. For MPEA, we observe third order
convergence in the 1D problem, while in the 2D case, we have second
order convergence  for small $\Delta t$. Overall, MPEA results
in lower errors compared to HeunP, MPE and the implicit midpoint
method. RK4P is most accurate.

\subsection{Stability of the time stepping methods}\label{sec:ts_stab}

It is well known that in explicit time stepping methods for the
Landau-Lifshitz equation, the choice of time step size $\Delta t$
given $\Delta x$ in space is severely constrained by numerical
stability, see for example \cite{gspm, llg_lncs}. Note that due to
the norm preservation property of the considered methods, the
solutions cannot grow arbitrarily as unstable solutions typically do
in other applications. However, when taking too large time steps,
explicit time integration will typically result in solutions that
oscillate rapidly and do not represent the intended solution in any
way. Following standard practice in the field, we refer to this
behavior as instability in this section.  This stability limit is
seen clearly for all methods expect IMP in \Cref{fig:ts}.

In numerical experiments, we observe that in order to obtain stable
solutions, the time step size $\Delta t$ has to be chosen
proportional to $\Delta x^2$ for all of the considered methods, both
explicit and semi-explicit.  This is exemplified in \Cref{fig:ts2}
for (EX1) with $\alpha = 0.01$.
\begin{figure}[ht!]
  \centering
    \includegraphics[width=.6\textwidth]{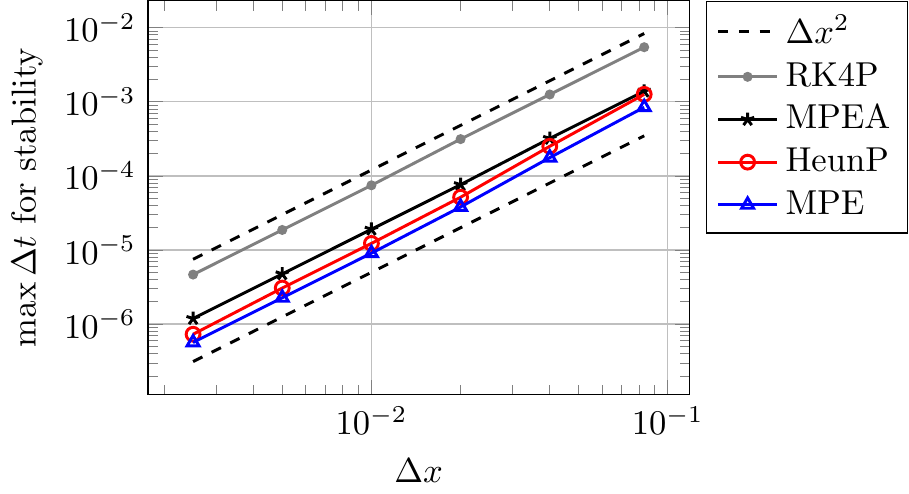}
    \caption{Empirically found maximum value of $\Delta t$ that still results in a
      stable solution for varying values of $\Delta x$ in (EX1), homogenized,  with $\alpha = 0.01$.}
  \label{fig:ts2}
\end{figure}
To get a better understanding of stability, consider a semi-discrete
form of the Landau-Lifshitz equation in one dimension with a
constant material coefficient, equal to one. This can be written as
\begin{equation}
  \label{eq:f_alfa}
  \partial_t \m = \f_\alpha(\m), \qquad \f_\alpha(\m) = - \m \times {D} \m - \alpha \m \times \m \times {D} \m=
  -B(\m)D\m,
\end{equation}
where
$\m\in\Real^{3N}$ contains the vectors $\{\m_i\}$,
$D$ is the discrete Laplacian
and $B(\m)$ is a
block diagonal skew-symmetric matrix with eigenvalues
$\{0,\ +i-\alpha,\ -i-\alpha\}$ that comes from the cross products.
If $\{\m_i\}$ samples a smooth function, the
Jacobian of $\f_\alpha(\m)$ can be approximated as
\[
\nabla_\m \f_\alpha(\m)=-\nabla_\m B(\m)D\m
\approx -B(\m)D.
\]
Still assuming smoothness, one can subsequently deduce \cite{thesis}
that the eigenvalues of the Jacobian are approximately given as the
eigenvalues $\omega$ of $-D$ multiplied by the eigenvalues of
$B(\m)$, namely
\[
\lambda_+\approx (i-\alpha)\omega, \qquad
\lambda_-\approx (-i-\alpha)\omega, \qquad
\lambda_0\approx 0.
\]
The eigenvalues of $-D$, the negative discrete Laplacian, are real,
positive and bounded by $O(\Delta x^{-2})$. Consequently, the
eigenvalues of the Jacobian $\nabla_\m \f_\alpha(\m)$ will lie along
the lines $s(\pm i-\alpha)$ for real $s\in[0,O(\Delta x^{-2})$] in
the complex plane.  This is illustrated in \Cref{fig:evals_J} where
we have plotted the eigenvalues of $\nabla_\m \f_\alpha(\m)$, scaled
by $\Delta x^2$, for several values of $\alpha$.  One can observe
that given $\alpha = 0$, the eigenvalues are purely imaginary. As
$\alpha$ increases, the real parts of the eigenvalues decrease
correspondingly.

For the Landau-Lifshitz equation
\cref{eq:prob} with a material coefficient as well as the
homogenized equation \cref{eq:hom}, the eigenvalues of the corresponding Jacobians
get a different scaling based on the material coefficient but their
general behavior is not affected.
We hence conjecture that it is necessary that
\begin{align}\label{eq:Cstab}
  \frac{\Delta t}{(\Delta x)^2} \le C_{\mathrm{stab}, \alpha},
\end{align}
where $C_{\mathrm{stab}, \alpha}$ is a constant depending on the
chosen integrator, the damping parameter $\alpha$ and the material
coefficient.  Based on several numerical examples, we observe for
the latter dependence that
\[C_{\mathrm{stab}, \alpha} \lesssim C_\mathrm{\alpha}
  \begin{cases}
    (\max_{y \in Y} |a(y)|)^{-1}, & \text{original problem},  \\
    (\max_{i,j} |A^H_{ij}| )^{-1}, & \text{homogenized problem},
  \end{cases}
\]
where $C_\mathrm{\alpha}$ denotes further dependence on $\alpha$ and the integrator.

\subsubsection*{Stability regions of related methods}

In order to better understand the stability behavior of the
considered time integrators, it is beneficial to study the
stability regions of some well-known, related methods.  For HeunP
and RK4P, we regard the corresponding integrators without
projection. We observe a very similar stability behavior when using Heun
and RK4 to solve the problems considered in \Cref{fig:ts,fig:ts2}.

To get some intuition about MPE(A), we start by considering the problem
\begin{align}\label{eq:LL_lin}
  \partial_t \m = - \n \times \H(\m) - \alpha \n \times \m \times \H(\m)
  = - \n \times \h(\m)
  ,
\end{align}
where $\n$ is a given vector function, constant in time, with
$|\n| = 1$. This corresponds to replacing the first $\m$ in each term on the
right-hand side in \cref{eq:prob} by a constant approximation.
For this problem, time stepping according to the MPE
update \cref{eq:mpe_update} results in
\begin{equation}
  \label{eq:mpe_update_ab}
  \frac{\m_i^{j+1} - \m_i^j}{\Delta t} = - \n_i \times \h_i^{j+1/2}
  = - \frac{3}{2} \left(\n_i \times \h_i(\m^j)\right) + \frac{1}{2} \left(\n_i \times \h_i(\m^{j-1})\right).
\end{equation}
This is the same update scheme as one gets when applying the
Adams-Bashforth 2 (AB2) method to \cref{eq:LL_lin}. In the same way,
MPEA and AB3 are connected. Furthermore, note that the term that was
replaced by $\n$ in \cref{eq:prob} to obtain \cref{eq:LL_lin} is the
one that is treated implicitly in the semi-implicit methods MPE(A).
We hence expect that studying the stability of AB2(3) can give an
indication of what to expect for MPE(A). This is backed up by the
fact that the stability properties observed for MPE(A) in
\Cref{fig:ts,fig:ts2} are closely matched when using AB2(3) to solve
the respective problems.

When comparing the stability regions of RK4, Heun, AB2 and AB3 as
shown in \Cref{fig:stab_reg}, one clearly sees that the Runge-Kutta
methods have larger stability regions than the multi-step
methods. RK4's stability region is largest and contains part of the
imaginary axis, while the one for Heun is only close to the
imaginary axis in a shorter interval.
\begin{figure}[h!]
    \begin{subfigure}[b]{0.45\textwidth}
    \centering
    \includegraphics[width=\textwidth]{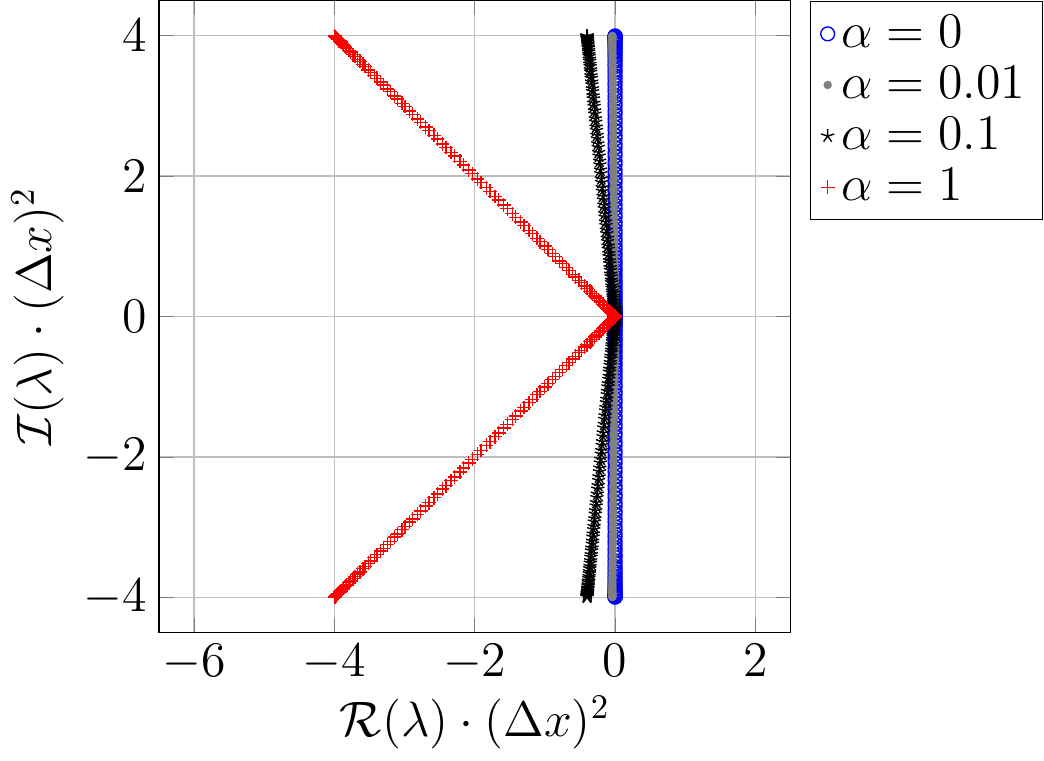}
    \caption{Eigenvalues of the Jacobian
      $\nabla \f_\alpha(\m)$ for several values of $\alpha$}
     \label{fig:evals_J}
   \end{subfigure}
   ~
  \begin{subfigure}[b]{0.45\textwidth}
    \centering
    \includegraphics[width=\textwidth]{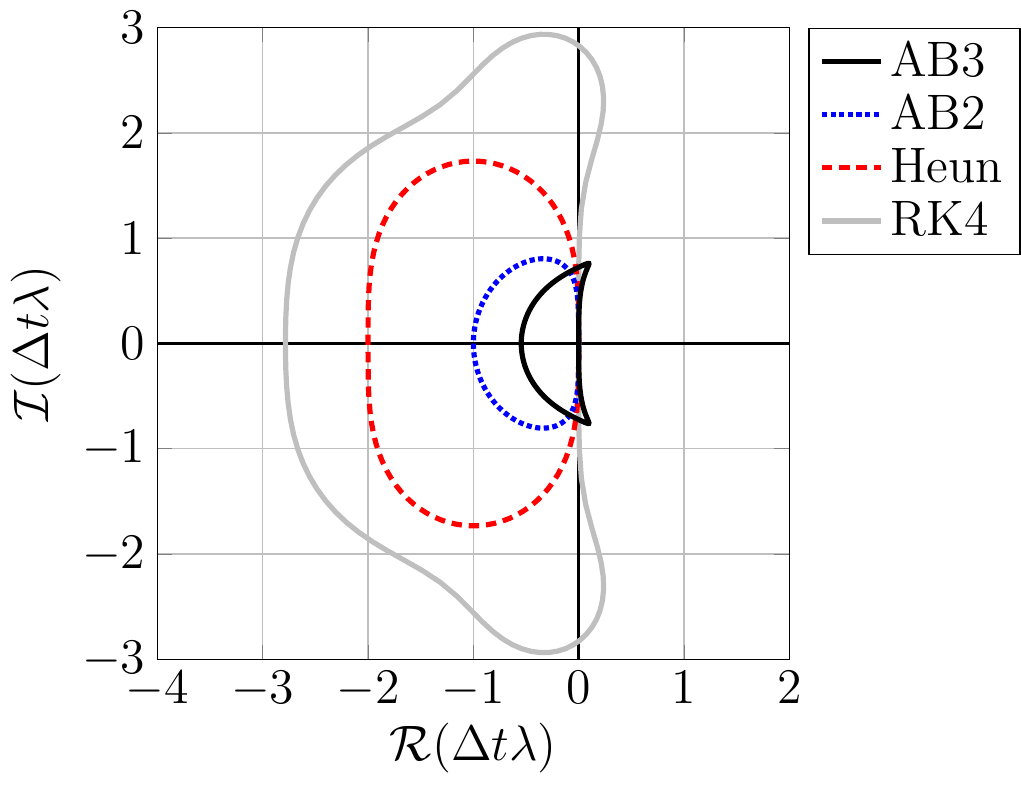}
    \caption{Stability regions of RK2 (Heun), RK4 as well as AB2 and AB3.}
    \label{fig:stab_reg}
  \end{subfigure}
  \caption{Eigenvalues of Jacobian $\nabla \f_\alpha(\m)$ and
    stability regions of methods related to the considered time integrators.}
\end{figure}
AB2 and AB3 have stability regions with a similar extent in the
imaginary direction, but while AB3's contains part of the imaginary
axis, AB2's does not. On the other hand, the stability region of AB2 is
wider in the real direction.

Consider now again the example problems shown in \Cref{fig:ts} where
$\alpha=0.01$, which implies that the eigenvalues of the
corresponding Jacobian are rather close to the imaginary axis. Based
on \Cref{fig:stab_reg} we therefore expect the methods with related
stability areas which include parts of the imaginary axis, RK4P and
MPEA, to require fewer time steps than HeunP and MPE. This matches
with the observed stability behavior in \Cref{fig:ts}.

\subsubsection*{Influence of $\alpha$}

To further investigate the influence of $\alpha$ on
$C_\mathrm{stab, \alpha}$, this factor is shown in
\Cref{fig:stab_lim} for varying $\alpha$, both for the considered
methods HeunP, RK4P, MPE and MPEA and the discussed related methods,
Heun, RK4, AB2 and AB3.  The behavior of $C_\mathrm{stab, \alpha}$
is almost the same for the actual and the related methods.

As expected, we observe that $C_\mathrm{stab, \alpha}$ for low
$\alpha$-values is constant for  MPEA and RK4P, with related
stability regions that include the imaginary axis, while
for HeunP and MPE, lower $\alpha$ results in lower $C_\mathrm{stab, \alpha}$.
When increasing $\alpha$, the eigenvalues of $\bnabla_m \f_\alpha$
as defined in \cref{eq:f_alfa} get larger real parts and the
stability regions' extent in the real direction becomes more
important.  For RK4P and MPEA, this means that for
$\alpha \gtrsim 0.2$, $C_\mathrm{stab, \alpha}$ decreases as
$\alpha$ increases.  For HeunP and MPE, the highest
$C_\mathrm{stab, \alpha}$ is obtained around $\alpha = 0.5$.  For
higher $\alpha$, the required $C_\mathrm{stab, \alpha}$ decreases as
$\alpha$ increases.
Overall, MPEA requires the lowest $C_{\mathrm{stab}, \alpha}$ for
high $\alpha$, which agrees with the fact that the related stability
region is shortest in the real direction. The highest
$C_{\mathrm{stab}, \alpha}$ is still the one for RK4P, in accordance
with the stability region considerations.

\begin{figure}[h!]
  \centering
    \includegraphics[width=.9\textwidth]{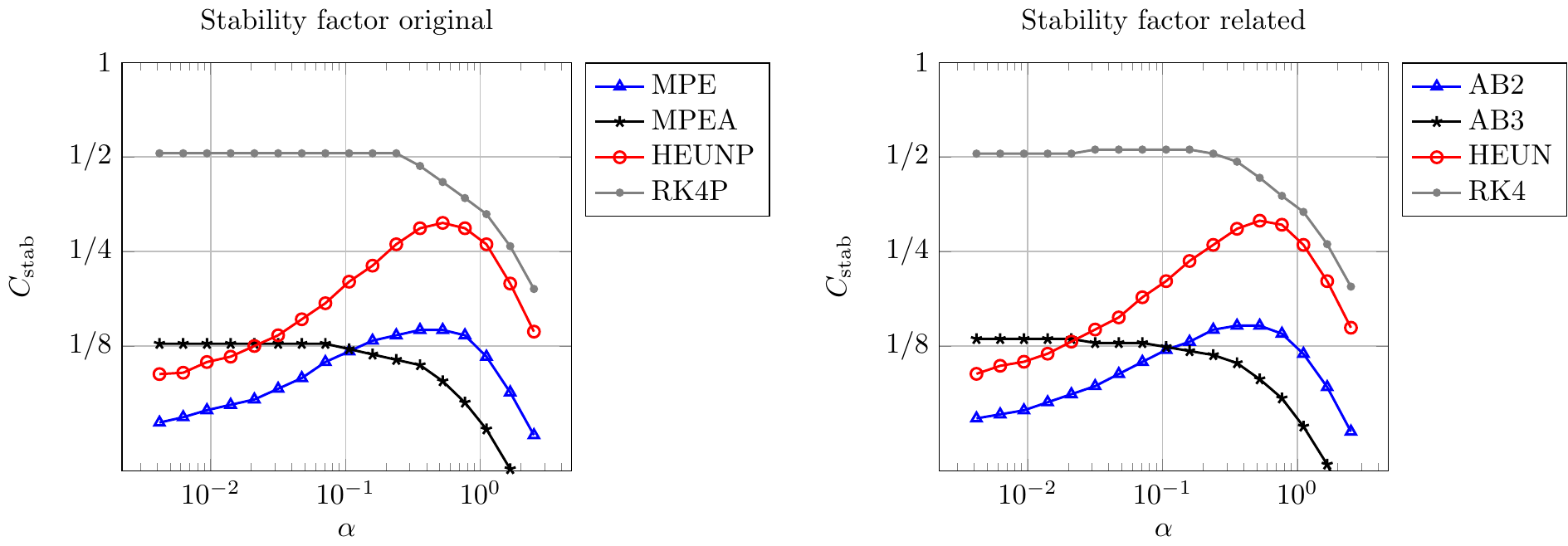}
    \caption{Dependence of $C_{\mathrm{stab}, \alpha}$ on $\alpha$, for actual and related time integrators. Based on the homogenized solution to
      (EX2). }
  \label{fig:stab_lim}
\end{figure}
However, HeunP is a two-stage method and each RK4P step consists of
four stages, while MPE and MPEA are multi-step methods that only
require computation of one new stage per time step.  In general,
each step of HeunP/RK4P thus has roughly two/four times the
computational cost as a MPE(A) step. To take this into account, we
compare the total number of computations for each method by
considering the factor $s/C_{\mathrm{stab}, \alpha}$, where $s$
denotes the number of stages in the method. This is shown in
\Cref{fig:stab_cost}.
\begin{figure}[h!]
  \centering
    \includegraphics[width=.55\textwidth]{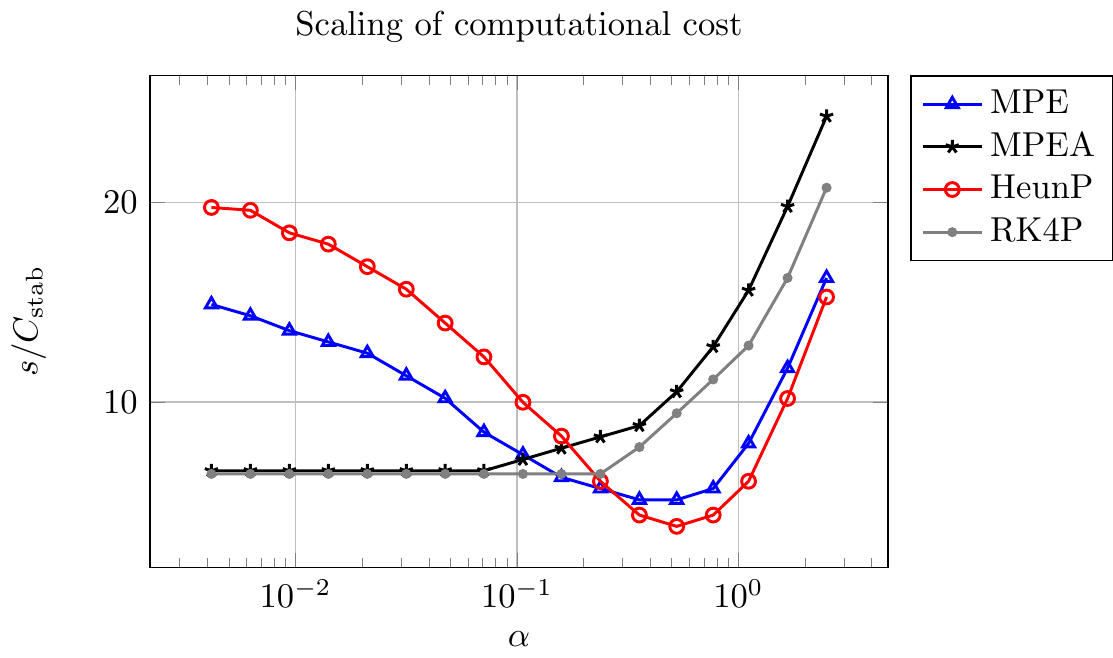}
    \caption{Left: Dependence of $C_{\mathrm{stab}, \alpha}$ on $\alpha$, based on the homogenized solution to
      (EX2). Right: corresponding scaling of computational cost.}
  \label{fig:stab_cost}
\end{figure}

{
We hence draw the following conclusion.
\begin{itemize}
\item For $\alpha < 0.1$, RK4 and MPEA result in approximately the
  same computational cost, independent of the specific value of $\alpha$, while MPE and
  HeunP require significantly more computations.
\item For high (artificial) damping, the situation changes and HeunP
  has the lowest computational cost of the considered time
  integrators.
\end{itemize}
}

\subsection{Macro time stepping}\label{sec:macro_ts}

On the HMM macro scale, the given spatial discretization is in
general rather coarse, containing only relatively few grid points,
and we are interested in longer final times. Therefore, an implicit
method such as IMP might seem suitable here. However, the resulting
computational cost is higher than with an explicit method since we
cannot compute the required Jacobian analytically as discussed in
\Cref{sec:description}.

When considering which (semi)-explicit method is most suitable, we
have to consider the value of the damping constant $\alpha$.
According to for example \cite{recording_materials, mayergoyz2009},
the value of $\alpha$ is less than 0.1 or even 0.01 for typical
metallic materials such as Fe, Co and Ni, and could be one or two
orders smaller for ferromagnetic oxides or garnets.  On the macro
scale, we hence typically reckon with $\alpha$ between $10^{-1}$ and
$10^{-4}$. This also matches the $\alpha$-values typically used in
the literature, see for instance \cite{gspm, garcia2003improved,
  garcia_review} and \cite{mpe}. For this range of $\alpha$, we
conclude based on the discussion in \Cref{sec:ts_stab} that RK4P and
MPEA are preferred for time integration on the macro scale. These
are also the methods which give the most accurate solutions as, for
example, shown in \Cref{fig:ts}.

For the overall error on the macro scale in the periodic case, we expect that
\begin{align}\label{eq:e_macro}
  \|\M^\varepsilon - \M_0\| \le C \left(\varepsilon + (\Delta t)^{k} + (\Delta X)^\ell + e_\mathrm{HMM}\right)
  \le C \left( \varepsilon + (\Delta X)^{\min(2k, \ell)} +  e_\mathrm{HMM}\right),
\end{align}
where the factor $\varepsilon$ follows from \Cref{thm:paper1} and
$k$ is the order of accuracy of the time integrator. Moreover, $\ell$
is the order of accuracy of the spatial approximation to the
effective field on the macro scale
and $e_\mathrm{HMM}$ is an additional error due to the fact that we
approximate this effective field by $\H_\mathrm{avg}$ in the upscaling
procedure.

Because of the time step restriction required for stability
in explicit and semi-explicit methods, \cref{eq:Cstab}, it is
desirable to have relatively large $\Delta x$ to reduce computational cost. We
therefore propose to use a higher order method in space, ideally $\ell = 2k$. %,

We hence have two possible choices. One can either use the fourth
order accurate RK4P in combination with an eighth order
approximation in space to get a macro scheme with very high order of
accuracy regarding space and time discretization. However, to get
the full effect of this, $\H_\mathrm{avg}$ has to be approximated
very precisely such that also $e_\mathrm{HMM}$ is low, see also
\Cref{sec:overall}, which in turn can result in rather high computational cost.
Alternatively, one can apply the second order MPEA and a spatial approximation such that $\ell = 4$
which results in fourth order accuracy of the space and time
discretization. Since MPEA also has the advantage that it is a geometric
integrator and norm preserving without projection, this is what we
propose to use. % subsequently.

\subsection{Micro time stepping}

When considering the micro problem, there are two important
differences compared to the macro problem.  First, the fast
oscillations in the solution $\m^\varepsilon$ are on different
scales in time and space. The time scale we are interested in is
$\mathcal{O}(\varepsilon^2)$ while the spatial scale is
$\mathcal{O}(\varepsilon)$.  Therefore a time step size proportional
to $(\Delta x)^2$ is suitable to obtain a proper resolution of the
fast oscillations in time. Second, as discussed
\Cref{sec:upscaling}, we can choose the damping parameter $\alpha$
for the micro problem to optimize convergence of the upscaling
errors.  As shown in \Cref{sec:micro}, it is typically advantageous
to use artificial damping and set $\alpha$ close to one, considerably higher than
in the macro problem.

The order of accuracy of the time integrator is not an important
factor, since already for a second order accurate
integrator, % Typically,
the time integration error usually is significantly lower than the
space discretization error due to the given time step restriction.
As the micro problem is posed on a relatively
short time interval and the solution then is averaged in the
upscaling process, inherent norm preservation that geometric
integrators have is not an important factor here either. The
considerations in \Cref{sec:ts_stab} thus imply that the
optimal strategy with respect to computational cost is to use
HeunP for time integration when $\alpha > 0.2$ is chosen in the micro problem. % For
% $\alpha < 0.2$ it is preferable to use RK4P.

\section{Micro problem setup}\label{sec:micro}

In this section, we investigate how different aspects of the micro
problem influence the upscaling error as well as the overall
macro solution. In particular, the choice of initial data
and the size of the computational and averaging domain in space and
time are important.  Consider the periodic case,
$a^\varepsilon(x) = a(x/\varepsilon)$. Then, it holds for the error
in the HMM approximation to the effective field that
 \begin{align}\label{eq:E_approx}
   E_\mathrm{approx}&:= \left|\H_\mathrm{avg} - \bnabla \cdot (\bnabla \M_0(x_i, t_j) \A^H)\right| \nonumber
   \\ &\le
        \left|\H_\mathrm{avg} - \bnabla \cdot (\bnabla \m_\mathrm{init}(0) \A^H)\right|
        + \left|\bnabla \cdot (\bnabla \m_\mathrm{init}(0) \A^H)
        - \bnabla \cdot (\bnabla \M_0(x_i, t_j) \A^H)\right| \nonumber
        \\&=: E_\mathrm{avg} + E_\mathrm{disc}.
 \end{align}
 The discretization error $E_\mathrm{disc}$ is determined by the
 choice of initial data $\m_\mathrm{init}$ to the micro problem and
 is analyzed in the next section, \Cref{sec:initial}.
 Given that we have initial data with $|\m_\mathrm{init}| = 1$ and
 an exact solution to the micro problem on the whole domain, the
 averaging error $E_\mathrm{avg}$ can be bounded using
 \Cref{thm:paper2}.  When solving the micro problem numerically and
 only on a subdomain $[-\mu', \mu']^d$, additional errors are
 introduced. We can split $E_\mathrm{avg}$ as
 \begin{align}\label{eq:E_avg}
   E_\mathrm{avg} = E_\varepsilon + E_\mu + E_\eta + E_{\mu'} + E_\mathrm{num},
 \end{align}
 where $E_\varepsilon, E_\mu$ and $E_\eta$ are as in
 \Cref{thm:paper2}. They depend on $\varepsilon$ and the parameters
 $\mu$ and $\eta$, which determine the size of the micro problem
 averaging domains in space and time. Moreover, the choice of
 averaging kernels, $K$ and $K^0$ influences these errors. How to
 specifically choose these parameters is discussed in
 \Cref{sec:micro_size}. The term $E_{\mu'}$ comprises errors due to
 the micro problem boundary conditions, as explained in
 \Cref{sec:bc}, and $E_\mathrm{num}$ errors due to the numerical
 discretization of the micro problem, which is done using a standard
 second order finite difference approximation to
 $\bnabla \cdot (a^\varepsilon \bnabla \m^\varepsilon)$ and HeunP
 for time integration.  Throughout the following sections, we assume
 that $E_\mathrm{num}$ is small compared to the other error terms
 and can be neglected.

\subsection{Initial data}\label{sec:initial}
We first consider how to choose the initial data $\m_\mathrm{init}$
to a micro problem based on the current macro state, obtained according to
\cref{eq:macro_disc}. % $\M$.
%%%%%%%%%%%%%%%%%%%%%%%%%%%%%%%%%%%%%%%%%%%%%%%%%%%%%%%%%%
% describe setup
We here suppose that the current given discrete magnetization values
match with the (exact) macro solution at time $t = t_j$,
$\M_i = \M(x_i, t_j)$.

% Hence,
The initial data $\m_\mathrm{init}$ for the micro problem
should satisfy two conditions.
\begin{enumerate}
\item It should be normalized, $|\m_\mathrm{init}(x)| = 1$ for all
  $x \in [-\mu', \mu']^d$, to satisfy the conditions necessary for \Cref{thm:paper2}, which we use to bound $E_\mathrm{avg}$.
\item The initial data should be consistent with the current macro solution
  in the sense that given a multi-index $\beta$ with $|\beta| \le 2$,
  \begin{align}\label{eq:consistency}
    \left|\partial_x^\beta \m_\mathrm{init}(0) - \partial_x^\beta \M(x_k, t_j)\right| =  \mathcal{O}((\Delta X)^\ell).
  \end{align}
  Then the discretization error in \cref{eq:E_approx} is
  $E_\mathrm{disc} = \mathcal{O}((\Delta X)^\ell)$.  As described in
  \cref{eq:e_macro} in \Cref{sec:macro_ts}, when using a $k$th order
  explicit time stepping method, it is ideal in terms of order of
  accuracy to choose $\ell = 2k$.

\end{enumerate}
%
%%%%%%%%%%%%%%%%%%%%%%%%%%%%%%%%%%%%%%%%%%%%%%%%%%%%%%%%%%%%%%%
% we use polynomial interpolation
In order to get initial data satisfying the requirements, an
approach based on polynomial interpolation is applied.  We use
$p^{[n]}(x)$ to denote an interpolating polynomial of order $n$, and
let $\P_{[n]}(x) = \left[p_1^{[n]}, p_2^{[n]}, p_3^{[n]}\right]^T$, a vector
containing an independent polynomial for each component in $\M$,
such that
\[\P_{[n]}(x_i) = \M_{i}, \quad i = 0, \ldots,  n.\]
When $d > 1$, we apply one-dimensional interpolation
in one space dimension after the other. For matters of simplicity,
we regard a 1D problem in the following analytical error
estimates. Due to the tensor product extension, the considerations
generalize directly to higher dimensions.

Without loss of generality, we henceforth assume that we want to
find initial data for the micro problem associated with the macro
grid point at location $x_{k}$ based on $2k$-th order polynomial
interpolation. This implies that the macro grid points involved in
the process are $x_j$, $j = 0, ..., 2k$.

%%%%%%%%%%%%%%%%%%%%%%%%%%%%%%%%%%%%%%%%%%%%%%%%%%%%%%%
According to standard theory, it holds for the interpolation errors that
given $0 \le i \le 2k$ and $\M \in C^{(2k+1)}([x_0, x_{2k}])$,
\begin{align}\label{eq:interpol_err}
  \sup_{x \in [x_0, x_{2k}]} |\P_{[2k]}^{(i)}(x) - \M^{(i)}(x, t_j)| % \le C \|\pi_{n+1}^{(j)}\|_\infty
  % \sup_{i = 1, 2, 3} \sup_{x \in [x_0, x_n]} |M_i^{(n+1)}|
  \le C (\Delta X)^{2k+1-i}.
\end{align}
Furthermore, it is well known that given a $2k$-th degree interpolating polynomial, it holds that
\begin{align*}
  \P_{[2k]}'(x_k) = D_{[2k]} \M(x_k, t_j), \qquad \P_{[2k]}''(x_k) = D_{[2k]}^2 \M({x_k}, t_j),
\end{align*}
where $D_{[2k]}$ and $D^2_{[2k]}$ denote the $2k$-th order standard central
finite difference approximations to the first and second
derivative, see for example \cite{Peiro2005}. As a direct consequence, we have
in the grid point $x_k$,
\begin{align}\label{eq:der_interpol_err}
  |\M'(x_k, t_j) - \P_{[2k]}'(x_k)| \le C (\Delta X)^{2k}, \qquad |\M''(x_k, t_j) - \P_{[2k]}''(x_k)| \le C (\Delta X)^{2k}.
\end{align}
Note that this gives a better bound for the error in the second
derivative in the point $x_k$ than \cref{eq:interpol_err}, valid on
the whole interval $[x_0, x_{2k}]$. The bounds in\Cref{eq:der_interpol_err} show
that we have the required consistency, \cref{eq:consistency}, between macro and micro
derivatives when directly using $\P_k(x)$ to obtain the initial data
for the micro problem. However, the disadvantage of this approach is
that the polynomial vector $\P_k$ is not normalized. For the
deviation of its length from one, it holds by \cref{eq:interpol_err}
that
\[\left||\P_{[2k]}| - 1\right| = \left| |\P_{[2k]}| - |\M|\right| \le |\P_{[2k]} - \M| \le C (\Delta X)^{2k+1}.\]

%%%%%
Consider instead a normalized function $\Y(x)$ for which
$|\Y(x)| = 1$. Then the derivative $\Y'$ is orthogonal to $\Y$ as it
holds that
\[\Y'(x)\cdot \Y(x) = \frac{1}{2} \frac{d}{dx} |\Y(x)|^2 = 0.\]
In particular, this shows that $\M' \cdot \M = 0$. However, in general,
\[\M(x_k, t_j) \cdot D_{[2k]} \M(x_k, t_j) = \P_{[2k]}(x_k) \cdot \P_{[2k]}'(x_k) \neq 0.\]
Hence there is no normalized interpolating
function $\Y$ such that $\Y'(x_\mathrm{k})$ becomes a standard linear $2k$-th
order central difference approximation, $D_{[2k]}$.
In the following, we consider the normalized interpolating function $\Q_{[n]}(x)$ defined as
\begin{equation}
  \label{eq:Q_k}
  \Q_{[n]}(x) := \P_{[n]}(x) / |\P_{[n]}(x)|\,,
\end{equation}
and show that it satisfies the consistency requirement \cref{eq:consistency}.

\begin{lemma}
  In one space dimension, the normalized function $\Q_{[2k]}$
  satisfies \cref{eq:consistency} with $\ell = 2k$.
\end{lemma}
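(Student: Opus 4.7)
The plan is to verify the consistency bound term by term for $|\beta| = 0, 1, 2$, exploiting the interpolation property of $\P_{[2k]}$ together with the well-known fact that for any function of unit length, the derivatives satisfy certain orthogonality relations. The key idea is that although $\P_{[2k]}'(x_k)\cdot\P_{[2k]}(x_k)$ does not vanish (unlike $\M'\cdot\M$), its value is small, of order $(\Delta X)^{2k}$, which is precisely what is needed.

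First I would treat $|\beta| = 0$. Since $\P_{[2k]}(x_k) = \M_k = \M(x_k,t_j)$ by interpolation and $|\M(x_k,t_j)| = 1$, we have $\Q_{[2k]}(x_k) = \M(x_k,t_j)$ exactly.

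For $|\beta| = 1$, apply the quotient rule to $\Q_{[2k]} = \P_{[2k]}/|\P_{[2k]}|$ to get
\begin{equation*}
\Q_{[2k]}'(x_k) = \P_{[2k]}'(x_k) - \M(x_k,t_j)\bigl(\M(x_k,t_j)\cdot\P_{[2k]}'(x_k)\bigr),
\end{equation*}
using $|\P_{[2k]}(x_k)| = 1$. Since $\M\cdot\M' = 0$, subtract $\M'(x_k,t_j) = \P_{[2k]}'(x_k) + \mathcal{O}((\Delta X)^{2k})$ (from \cref{eq:der_interpol_err}) to rewrite the correction term as $\M(x_k,t_j)\bigl(\M(x_k,t_j)\cdot(\P_{[2k]}'(x_k)-\M'(x_k,t_j))\bigr)$, which is $\mathcal{O}((\Delta X)^{2k})$.

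The main effort is $|\beta| = 2$. Here I would differentiate the quotient rule expression once more and evaluate at $x_k$, collecting terms as
\begin{equation*}
\Q_{[2k]}''(x_k) = \P_{[2k]}''(x_k) - \bigl(|\P_{[2k]}'(x_k)|^2 + \M\cdot\P_{[2k]}''(x_k)\bigr)\M - 2A\,\P_{[2k]}'(x_k) + 3A^2\M,
\end{equation*}
where $A := \M\cdot\P_{[2k]}'(x_k)$ and $\M := \M(x_k,t_j)$. The crucial observation is that $A = \M\cdot(\P_{[2k]}'(x_k)-\M'(x_k,t_j)) = \mathcal{O}((\Delta X)^{2k})$ by \cref{eq:der_interpol_err} combined with $\M\cdot\M' = 0$, so the last two terms are $\mathcal{O}((\Delta X)^{2k})$. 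For the leading pieces, use $\P_{[2k]}''(x_k) = \M''(x_k,t_j) + \mathcal{O}((\Delta X)^{2k})$ and the identity $|\M'|^2 + \M\cdot\M'' = 0$ (obtained by differentiating $|\M|^2 \equiv 1$ twice) to conclude that $|\P_{[2k]}'(x_k)|^2 + \M\cdot\P_{[2k]}''(x_k) = \mathcal{O}((\Delta X)^{2k})$, so that the bracketed coefficient of $\M$ contributes at the same order.

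The main obstacle is the bookkeeping in the second-derivative step: one must track the various $|\P_{[2k]}|^{-3}$ and $|\P_{[2k]}|^{-5}$ factors, use $|\P_{[2k]}(x_k)| = 1$ to simplify at the evaluation point, and then verify that the orthogonality identities for $\M$ neutralize exactly the terms that survive to leading order. Once this is done, the bound $|\Q_{[2k]}''(x_k)-\M''(x_k,t_j)| = \mathcal{O}((\Delta X)^{2k})$ follows.
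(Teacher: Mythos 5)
Your proposal is correct and follows essentially the same route as the paper's proof: the projection identity $\Q_{[2k]}'(x_k)=(\I-\M_k\M_k^T)\P_{[2k]}'(x_k)$, the smallness of $\M_k\cdot\P_{[2k]}'(x_k)$ via orthogonality of $\M$ and $\M'$, and the rewriting of $|\P_{[2k]}'|^2+\M\cdot\P_{[2k]}''$ using $|\M'|^2+\M\cdot\M''=0$ together with \cref{eq:der_interpol_err} are exactly the ingredients used there. The only difference is cosmetic: you evaluate at $x_k$ with $|\P_{[2k]}(x_k)|=1$ from the start, whereas the paper writes the general formula \cref{eq:Q_der2} first and then specializes.
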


\begin{proof}
 As $|\P_{[2k]}(x_i)| = |\M_i| = 1$ in the grid points $x_i$,
 where $i = 0, ..., 2k$, it follows directly that $\Q_{[2k]}(x_i) = \M_i$ for
 $i = 0, ... , 2k$, the normalized function still interpolates the
 given points.

For the first derivative of $\Q_{[2k]}$, it holds that
\begin{equation*}
  \begin{split}
  \Q_{[2k]}' &= \frac{d}{dx} \left(\P_{[2k]} / |\P_{[2k]}|\right)
%= \left[\P_{[2k]}' |\P_{[2k]}| - \P_{[2k]} (|\P_{[2k]}|)'\right]/(|\P_{[2k]}|^2) \\
= \P_{[2k]}' / |\P_{[2k]}| -  (\P_{[2k]}^T \P_{[2k]}') \P_{[2k]} /|\P_{[2k]}|^3 \,,
  \end{split}
\end{equation*}
which, together with the fact that $\P_{[2k]}(x_i) = \M_i$ for $i = 0, .., 2k$, implies that
\begin{equation}\label{eq:Q_der}
  \Q_{[2k]}'(x_i) %= \P_k'(x_j) - \P_k(x_j) \P_k(x_j)^T \P_k'(x_j)
= (\I - \M_i \M_i^T) \P_{[2k]}'(x_i).
\end{equation}
In particular, it holds due to orthogonality that
\begin{align*}
  \left|\Q_{[2k]}'(x_i) - \M'(x_i)\right|
  &= \left| (\I - \M_i \M_i^T) \left(\P_{[2k]}'(x_i) - \M'(x_i)\right)\right| \\
    &\le C |\P_{[2k]}'(x_i) - \M'(x_i)| \le C (\Delta X)^{2k},
\end{align*}
  hence $\Q_{[2k]}'(x_k)$ is a $2k$-th order approximation to the derivative of $\M$ in
  $x_k$.
  For the second derivative of $\Q_{[n]}$, it holds in general that
  \begin{equation}\label{eq:Q_der2}
  \Q_{[n]}'' = \frac{\P_{[n]}''}{|\P_{[n]}|} - 2 \frac{(\P_{[n]}\cdot\P_{[n]}')\P_{[n]}'}{|\P_{[n]}|^3} + 3 \frac{(\P_{[n]} \cdot \P_{[n]}')^2 \P_{[n]}}{|\P_{[n]}|^5} - \frac{(|\P_{[n]}'|^2 + \P_{[n]}'' \cdot \P_{[n]}) \P_{[n]}}{|\P_{[n]}|^3},
\end{equation}
where we can rewrite
\[ |\P_{[n]}'|^2 + \P_{[n]}'' \cdot \P_{[n]}  = (\P_{[n]}- \M) \cdot \P_{[n]}'' + (\P_{[n]}' - \M') \cdot (\M' + \P_{[n]}') + (\P_{[n]}'' - \M'') \cdot \M,\]
For a  $2k$-th order interpolating polynomial, we hence have in the grid point $x_k$ that
\begin{align*}
  &\left| \left(|\P_{[2k]}'(x_k)|^2 + \P_{[2k]}''(x_k) \cdot \P_{[2k]}(x_k)\right) \P_{2k}(x_k)\right| \\
  &\hspace{1cm}\le \left|(\P_{[2k]}'(x_k) - \M'(x_k)) \cdot (\M'(x_k) + \P_{[2k]}'(x_k)) + (\P_{[2k]}''(x_k) - \M''(x_k)) \cdot \M(x_k)\right| \\
    &\hspace{1cm} \le C \left(\left|(\P_{[2k]}'(x_k) - \M'(x_k)) \right| + \left|\P_{[2k]}''(x_k) - \M''(x_k))\right|\right) \le C (\Delta X)^{2k},
\end{align*}
where we used \cref{eq:der_interpol_err} in the last step.
Moreover, it holds due to orthogonality and \cref{eq:der_interpol_err} that
\[ |\P_{[2k]}(x_k) \cdot \P_{[2k]}'(x_k)| = |\M_k \cdot (\P_{[2k]}'(x_k) - \M'(x_k))|
  \le |\P_{[2k]}'(x_k) - \M'(x_k)| \le C (\Delta X)^{2k}.\]
It therefore follows that
\begin{align*}
  |\Q_{[2k]}''(x_k) - \P_{[2k]}''(x_k)|
  &\le 2\left|(\P_{[2k]}(x_k) \cdot \P_{[2k]}'(x_k)) \P_{[2k]}'(x_k) \right|
    \\ &+ 3  \left|\left(\P_{[2k]}(x_k) \cdot \P_{[2k]}'(x_k)\right)^2 \P_{[2k]}(x_k) \right|
    \\ &+ \left| \left(|\P_{[2k]}'(x_k)|^2 + \P_{[2k]}''(x_k) \cdot \P_{[2k]}(x_k)\right) \P_{2k}(x_k)\right|\le C (\Delta X)^{2k},
\end{align*}
which by \cref{eq:der_interpol_err} implies that $\Q_k''(x_k)$ is a
$2k$-th order approximation to $\M''$ (but not a standard linear
central finite difference approximation).
\end{proof}

%%%%%%%%%%%%%%%%%%%%%%%%%%%%%%%%%%%%%%%%%%%%%%%%%%%%%%%%%%%%%%%
We can conclude that when using either $\P_{[2k]}$ or $\Q_{[2k]}$ to
obtain initial data $\m_\mathrm{init}$ for the HMM micro problem,
then $\partial_{xx} \m_\mathrm{init}(0)$ is a $2k$-th order
approximation to $\partial_{xx} \M(x_k, t_j)$, where $x_k$ is the
grid point in the middle of the interpolation stencil. Similarly, in
two space dimensions, $\partial_{xy} \m_\mathrm{init}(0)$ and
$\partial_{yy} \m_\mathrm{init}(0)$ are $2k$-th order approximations
to $\partial_{xy} \M(x_k, t_j)$ and $\partial_{yy} \M(x_k, t_j)$.
Thus, the discretization error, which corresponds to the interpolation error, is
\begin{align}\label{eq:E_interpol}
  E_\mathrm{disc} = \bnabla \cdot (\bnabla \m_\mathrm{init}(0) \A^H) - \bnabla \cdot (\bnabla \M(x_k, t_j) \A^H) = C (\Delta
  X)^{2k}.
\end{align}
Both $\P_{[2k]}$ and $\Q_{[2k]}$ hence satisfy the
consistency requirement \cref{eq:consistency} for the initial data. However, only $\Q_{[2k]}$
 is normalized, therefore this is
what we choose subsequently. Typically, the difference between the
approximations is only rather small, though, as shown in the
following numerical example.

%%%%%%%%%%%% Numerical example
\subsubsection*{Numerical example}
As an example, consider the initial data for a micro problem on a 2D
domain of size $10 \varepsilon$ in each space dimension, obtained by
interpolation from the macro initial data of (EX2) and
(EX3).

We first investigate the maximal deviation of the length of
$\m_\mathrm{init}$ from one when using $\P_{[2]}$ and $\P_{[4]}$ to obtain the
initial data. In \Cref{fig:norm_dev}, this error is shown for
varying $\Delta X$ and different values of $\varepsilon$.
\begin{figure}[ht]
  \centering
   \includegraphics[width=.9\textwidth]{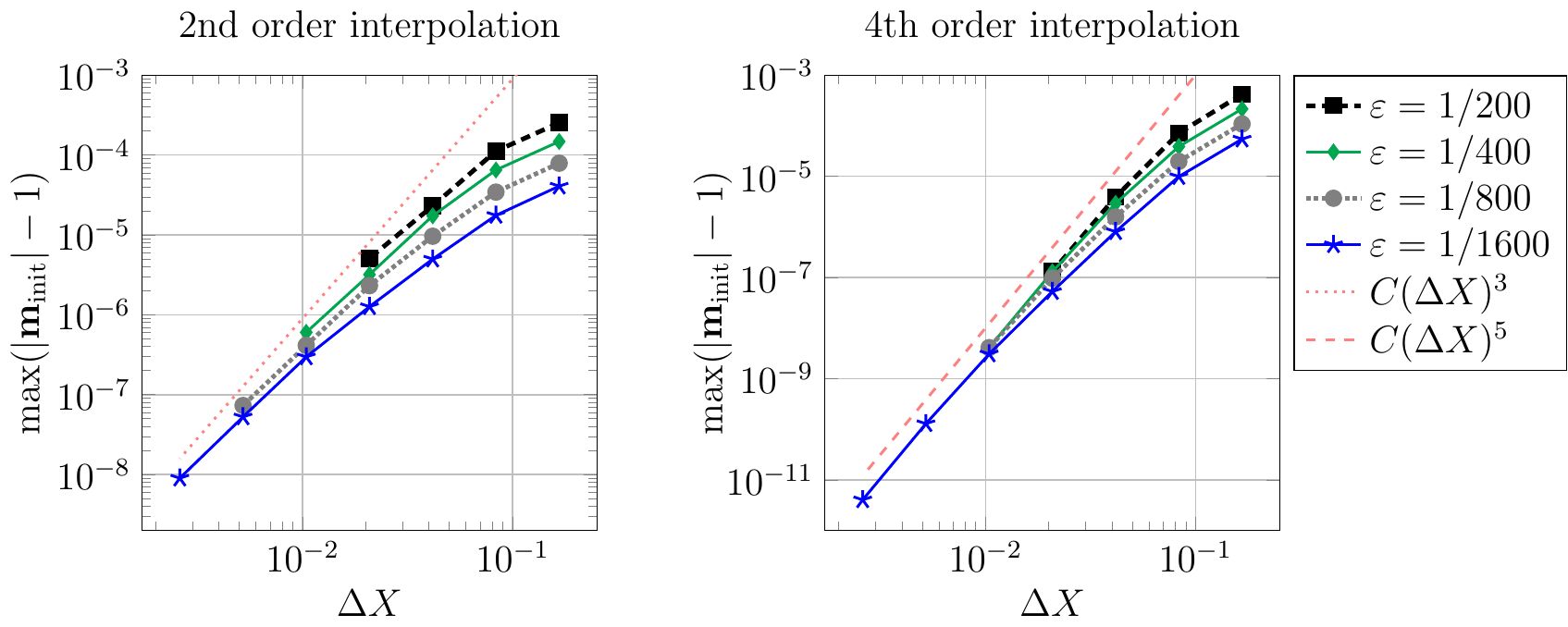}
   \caption{Maximum norm deviation in polynomial
     interpolation initial data $\P_{[2]}$ (left) and $\P_{[4]}$ (right) from one given a micro
     domain size of $10\varepsilon$, for several values of $\varepsilon$. 2D problem
     with macro initial data as in (EX3) and (EX2).
     Only values where $2k \Delta X > 10\varepsilon$ are plotted
     to avoid extrapolation.
   }
  \label{fig:norm_dev}
\end{figure}

One can observe that the deviation decreases as the macro step size
$\Delta X$ decreases. Moreover, especially for high $\Delta X$ values, smaller
$\varepsilon$ result in smaller deviations. This is due to the fact
that a smaller $\varepsilon$ corresponds to a smaller micro domain,
around $x_k$. The maximum possible norm deviation is only
attained further away from $x_k$. In the limit, as
$\varepsilon \to 0$, the norm deviation vanishes.

%%%%%%%%%%%%%%%%%%%%%%%%%%%%%%%%%%%%%%%%%%%%%%%

Next, we examine the difference between $\P_{[2k]}$ and $\Q_{[2k]}$
and the order of accuracy of the resulting approximations to
$\bnabla \cdot (\bnabla \M \A^H)$.  In the left subplot in
\Cref{fig:interpol}, the error $E_\mathrm{disc}$ is shown for
several values of $\Delta X$ and for second and fourth order
interpolation. The expected convergence rates of
$(\Delta X)^{2k}$ can be observed for both approximations, based on
$\P_\mathrm{[2k]}$ and $\Q_\mathrm{[2k]}$.
\begin{figure}[ht!]
  \centering
   \includegraphics[width=.75\textwidth]{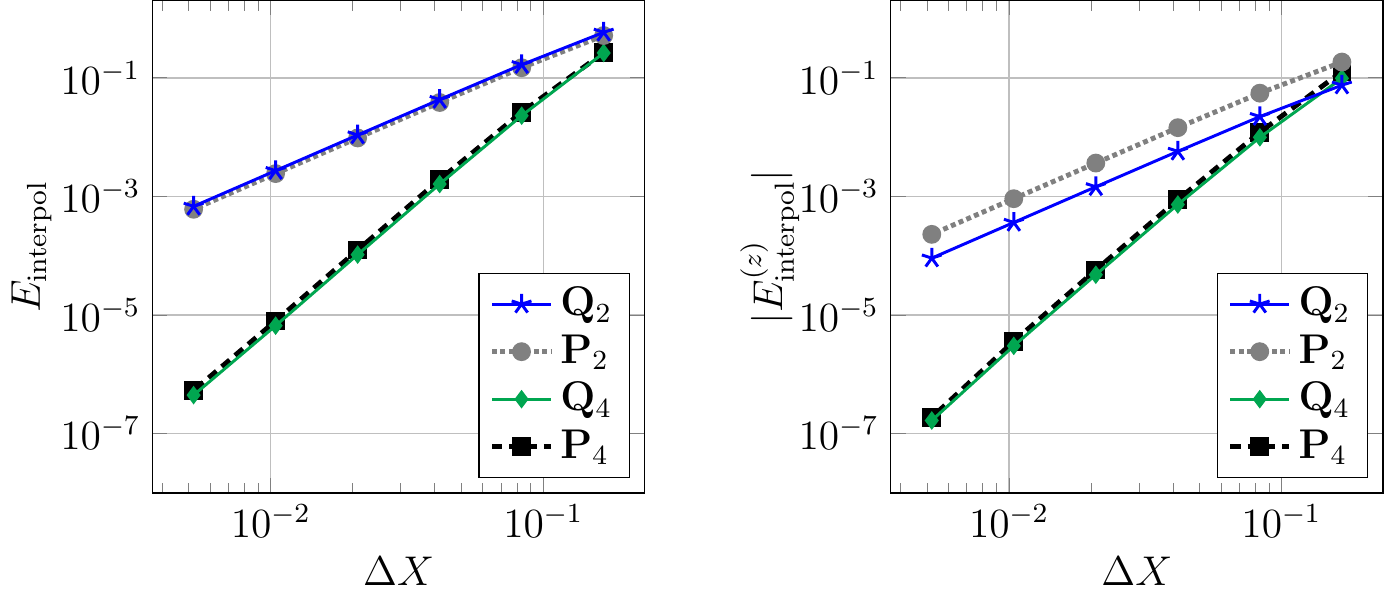}
   \caption{Interpolation error $E_\mathrm{disc}$ with
     and without normalization, i.e. using $\Q_{[2k]}$ and $\P_{[2k]}$, when varying
     macro grid spacing $\Delta X$. $2$nd and $4$th order interpolation. Left:
     norm of the error between approximated and actual effective
     field for (EX2). Right: $z$-component only.}
  \label{fig:interpol}
\end{figure}
In the right subplot, only the difference between the
$z$-components of $\bnabla \cdot (\bnabla \m_\mathrm{init} \A^H)$ and
$\bnabla \cdot (\bnabla \M \A^H)$ is considered to emphasize the fact
that while $\P_{[2k]}$ and $\Q_{[2k]}$ result in approximations of the same order
of accuracy, they do not give the same approximation.

\subsection{Boundary conditions}\label{sec:bc}
In this section, the issue of boundary conditions for the HMM micro
problem is discussed. In the case of a periodic material
coefficient, as considered for the estimate in \Cref{thm:paper2},
the micro problem would ideally be solved on the whole domain
$\Omega$ with periodic boundary conditions, even though the
resulting solution is only averaged over a small domain
$[-\mu, \mu]^d$.  This is not a reasonable choice in practice, since
the related computational cost is too high. We therefore have to
restrict the size of the computational domain for the micro problem
and complete it with boundary conditions. Every choice of boundary
conditions introduces some error inside the domain in comparison to
the whole domain solution since it is not possible to exactly
match both ``incoming'' and ``outgoing'' dynamics. In
\Cref{fig:edn}, the effect of boundary conditions in comparison to
the solution on a much larger domain is illustrated for one example
1D microproblem with the setup (EX1). The solution in the micro
domain as well as the errors due to two kinds of boundary conditions
are plotted: assuming periodicity of
$\m^\varepsilon - \m_\mathrm{init}$ (middle) and homogeneous
Dirichlet boundary conditions for
$\m^\varepsilon - \m_\mathrm{init}$ (right).
\begin{figure}[h]
  \centering
   \includegraphics[width=.31\textwidth, trim=40 30 10 30]{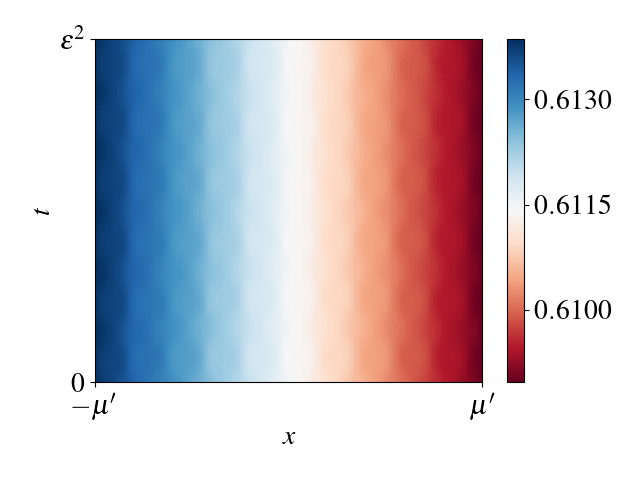}
  ~
  \includegraphics[width=.31\textwidth, trim=40 30 10 30]{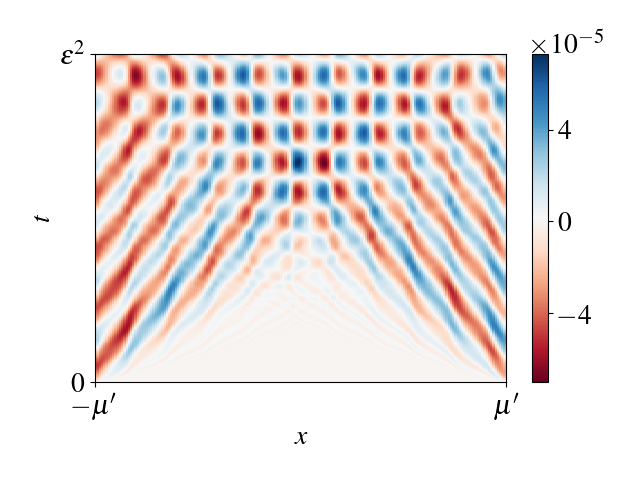}
  ~
  \includegraphics[width=.31\textwidth, trim=40 30 10 30]{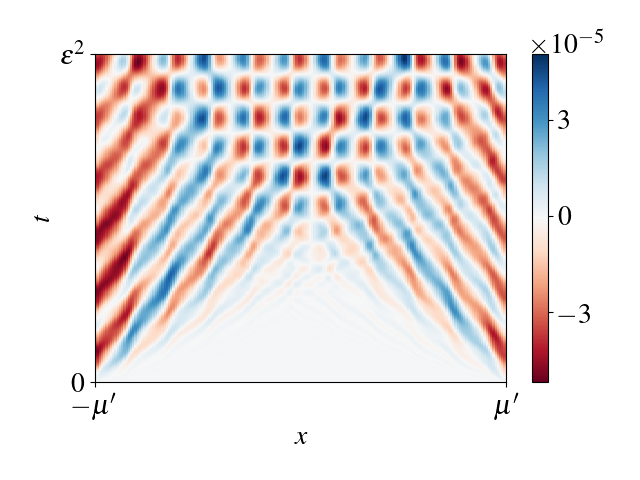}

  \caption{Example (EX1) with $\alpha = 0.01$, comparison of solution on micro domain
    $[-\mu', \mu']$, where $\mu' = 5\varepsilon$ and
    $\varepsilon = 2\cdot10^{-3}$, to solution on a 10 times larger
    domain. Left: x component of the expected solution, middle:
    error with periodic boundary conditions for
    $\m^\varepsilon - \m_\mathrm{init}$, right: error with Dirichlet
    boundary condition.}
  \label{fig:edn}
\end{figure}
In both cases, one can observe errors propagating into the domain
from both boundaries as time increases, even though the amplitude of
the errors is influenced by the type of condition.  Since we cannot
remove this problem even when considering more involved boundary
conditions, we choose to solve the micro problem on a domain
$[-\mu', \mu']^d$, for some $\mu' \ge \mu$, with Dirichlet boundary
conditions and only average over $[-\mu, \mu]^d$. The size of the
domain extension $\mu' - \mu$ together with the time parameter
$\eta$ determine how large the boundary error $E_{\mu'}$ in
\cref{eq:E_avg} becomes. For larger values of $\eta$, we expect a
larger $\mu' - \mu$ to be required to obtain $E_{\mu}$ below a given
threshold, since given a longer final time the errors can propagate
further into the domain. This is investigated in more detail in the
next section.

\subsection{Size of micro domain}\label{sec:micro_size}
Here, we investigate how to choose the size of the micro problem
domain. There are three important parameters that have to be set,
$\mu$ and $\eta$ as in \Cref{thm:paper2}, which determine the size
of the averaging domain in space and time, as well as the outer box
size $\mu'$. Note that the optimal choice of all three parameters is
dependent on the given initial data and the material coefficient.

To determine the influence of the respective parameters, we consider
the example (EX2), with a periodic material coefficient, and
investigate for one micro problem the error $E_\mathrm{avg}$ as
given in \cref{eq:E_avg}. Throughout this section, we consider
averaging kernels $K, K^0$ with $p_x = p_t = 3$ and $q_x = q_t = 7$,
based on the experiments in \cite{paper2}. Typically,
$\varepsilon = 1/400$ is used, which results a value of
$E_\varepsilon$ that is relatively low compared to other
contributions to $E_\mathrm{avg}$.

\subsubsection*{Averaging domain size, $\mu$}
To begin with, we choose a large value for the computational domain $\mu'$,
so that
\[E_\mathrm{avg} \approx E_\varepsilon + E_\eta + E_\mu.\]
We then vary the averaging parameter $\mu$,
which affects the error contribution $E_{\mu}$, that satisfies
\cref{eq:err_terms}, repeated here for convenience,
\begin{equation}
  \label{eq:e_mu}
  E_\mu \le C \left(\mu^{p_x+1} +
    \left(\frac{\varepsilon}{\mu}\right)^{q_x+2}\right).
\end{equation} Based on
the considerations in \cite{paper2} and \Cref{thm:paper2}, we expect
that $\mu$ should be chosen to be a multiple of $\varepsilon$, which
is the scale of the fast spatial oscillations in the problem. %  For
% the considered values of $\mu$ and w
With the given averaging kernel, the first term on the right-hand side in \cref{eq:e_mu} then is
small in comparison to the other error contributions and the second
term dominates $E_\mu$.

In \Cref{fig:err_mu}, the development of $E_\mathrm{avg}$ when
increasing $\mu$ for (EX2) is shown for several values of $\eta$ and
$\alpha$.  One can observe that as $\mu$ is increased, the error
decreases rapidly from high initial levels. This is due to the
contribution $C \left(\frac{\varepsilon}{\mu}\right)^{q_x+2}$ to
$E_\mu$.  Once $\mu$ becomes sufficiently large, in this example
around $\mu \approx 3.5\varepsilon$, the error does not change
significantly anymore but stays at a constant level, depending on
$\eta$ and $\alpha$. Here $E_\mu$ no longer dominates the error,
which will instead be determined by $E_\eta$ and
$E_\varepsilon$. One can observe that longer times $\eta$ result in
lower overall errors. Furthermore, the errors in the high damping
case, $\alpha = 1$, are considerably lower than with $\alpha = 0.1$
or $\alpha = 0.01$.  However, note that the required value of $\mu$
until the errors no longer decrease is independent of both $\alpha$
and $\eta$.  In the subsequent investigations, we therefore choose a
fixed value of $\mu$ which is slightly above this number, thus
making sure that $E_\mu$ does not significantly influence the
overall error observed there.

\begin{figure}[h]
  \centering
  \includegraphics[width=\textwidth]{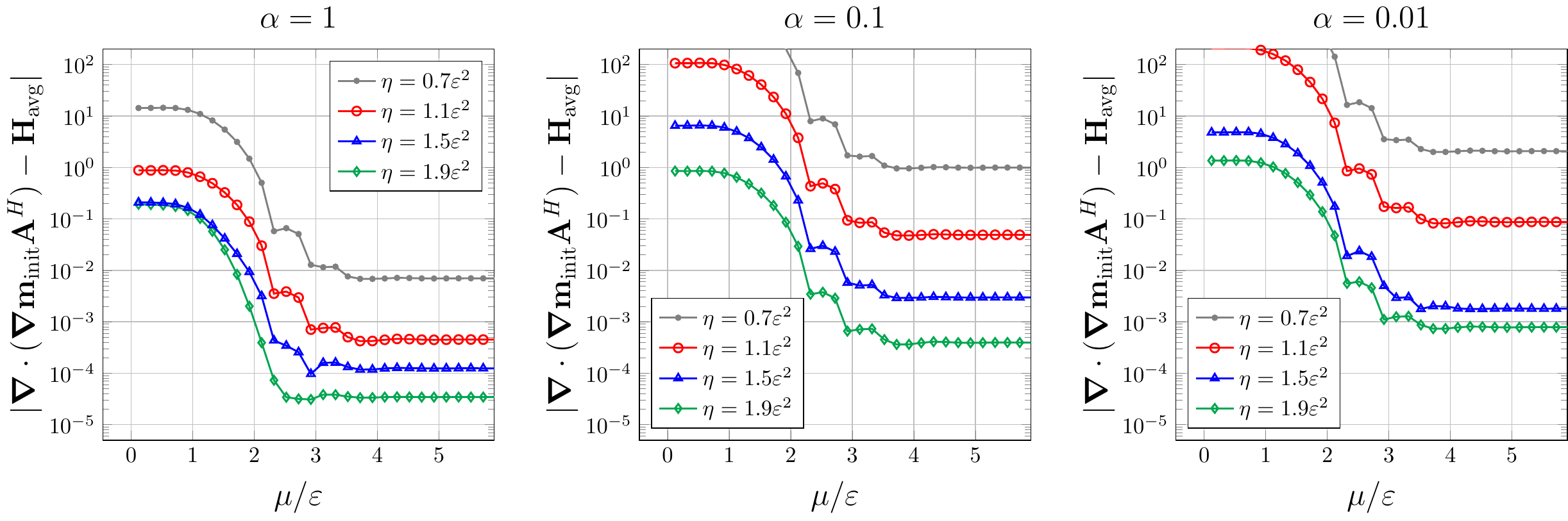}
  \caption{(EX2): Influence of spatial averaging size $\mu$ on the
    overall error in one micro problem for several values of $\eta$
    and $\alpha$ when $\varepsilon = 1/400$. Kernel parameters
    $p_x = p_t = 3$ and $q_x = q_t = 7$.  The outer box size $\mu'$
    is chosen sufficiently large to not significantly influence the
    results.  }
  \label{fig:err_mu}
\end{figure}

\subsubsection*{Full domain size, $\mu'$}
Next, we study the effect of the size of the computational domain,
which is determined by $\mu'$, on the error $E_\mathrm{avg}$.
We here choose $\mu$ large enough so that
\[E_\mathrm{avg} \approx E_\varepsilon + E_\eta + E_{\mu'}.\] We
consider the same choices of $\alpha$ and $\eta$-values as in the
previous example, and vary $\mu'$ to investigate $E_{\mu'}$.  Note
that in contrast to the other error terms, we do not have a model
for $E_{\mu'}$.  As shown in \Cref{fig:err_outer}, a value of $\mu'$
that is only slightly larger than $\mu$ gives a high error, which
decreases as $\mu'$ is increased, until the same error levels as
in \Cref{fig:err_mu}, determined by $E_\eta$ and $E_\varepsilon$, are reached.
\begin{figure}[h]
  \centering
  \includegraphics[width=\textwidth]{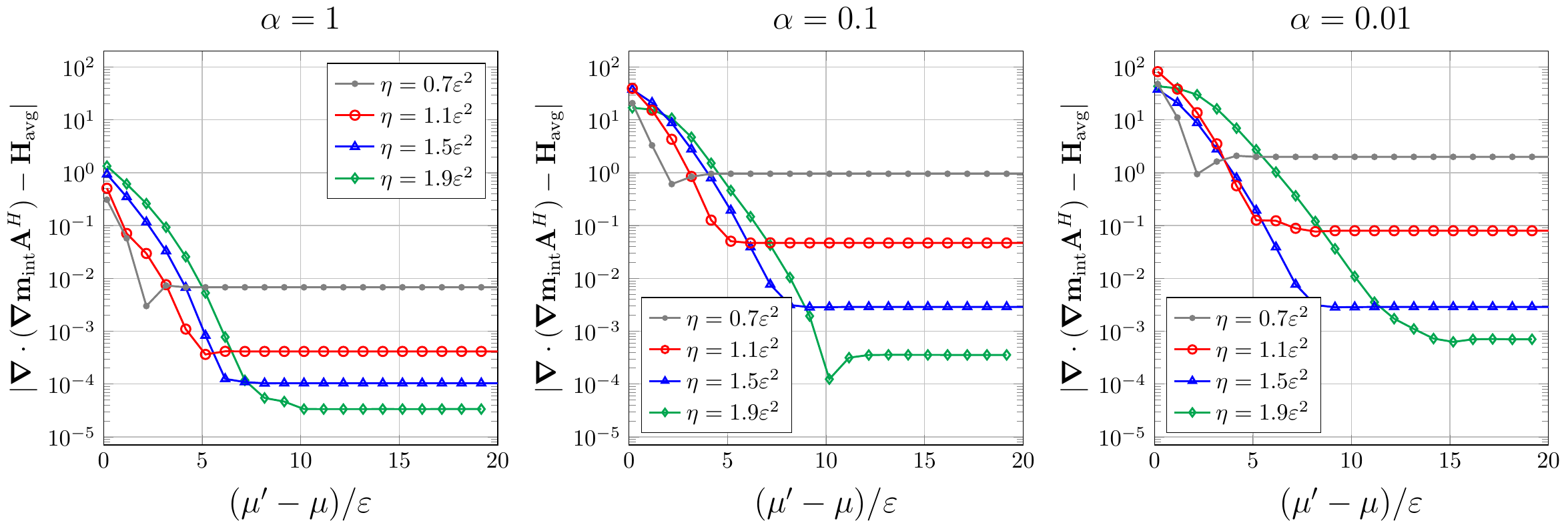}
  \caption{(EX2): Influence of the extension of the computational
    domain, $[-\mu', \mu']^d$ beyond the averaging domain, $[-\mu, \mu]^d$, on the overall averaging error $E_\mathrm{avg}$ in one micro problem for
    several values of $\eta$ and $\alpha$. Here
    $\mu = 3.9 \varepsilon$ and $\varepsilon = 1/400$. %Delta X = 1/14 % FULL
  }
  \label{fig:err_outer}
\end{figure}
The longer the time interval $\eta$ considered, the larger the domain has to
be chosen to reduce the boundary error $E_{\mu'}$ such that it no
longer dominates $E_\mathrm{avg}$. This is due to the fact that the
boundary error propagates further into the domain the longer time
passes.  We can furthermore observe that larger $\alpha$ results in
somewhat faster convergence of the error for higher values of
$\eta$.

\subsubsection*{Length of time interval $\eta$}
Finally, we consider the influence of $\eta$ and the corresponding
error contribution $E_\eta$ to the averaging error $E_\mathrm{avg}$
as given in \cref{eq:E_avg}. Based on \Cref{thm:paper2},
we have
\begin{align}\label{eq:E_eta}
  E_\eta \le C_\mu \left(\eta^{p_t+1} +
    \left(\frac{\varepsilon^2}{\eta}\right)^{q_t +1}\right),
\end{align}
repeated here for convenience. We consider
$\eta \sim \varepsilon^2$.  With the given choice of averaging
kernel, with $p_t = 3$,  the first term in \cref{eq:E_eta} is small compared to the
second one. We choose the parameters $\mu$ and $\mu'$ such that
\[E_\mathrm{avg} \approx E_\varepsilon + E_\eta\]
and vary $\eta$.
In \Cref{fig:err_eta}, one can then observe that
% given a sufficiently large computational domain,
higher values of $\eta$ result in lower errors, since the second
term on the right hand side in \cref{eq:E_eta} decreases as $\eta$
increases.  This matches with the error behavior depicted in
\Cref{fig:err_mu} and \Cref{fig:err_outer}.  \Cref{fig:err_eta}
furthermore shows that the error eventually saturates at a certain
level, %, $\varepsilon$-dependent level,
corresponding to $E_\varepsilon$. Comparing the errors for
$\alpha = 1$ with $\varepsilon = 1/200$ and $\varepsilon = 1/400$,
one finds that the respective $E_\varepsilon$ differ by a factor of
approximately four, which indicates that
$E_\varepsilon \le C \varepsilon^2$ here.  The different cases of
$\alpha$ considered in \Cref{fig:err_eta} have a similar overall
behavior of the error, but for high damping, $\alpha = 1$, the
development happens for considerably lower values of $\eta$ than in
the other cases. Moreover, in case of $\alpha = 0.01$, we observe
some oscillations as the error decreases.
\begin{figure}[h!]
  \centering
    \begin{subfigure}[t]{.45\textwidth}
      \centering
       \vspace{0pt}
       \includegraphics[width=\textwidth]{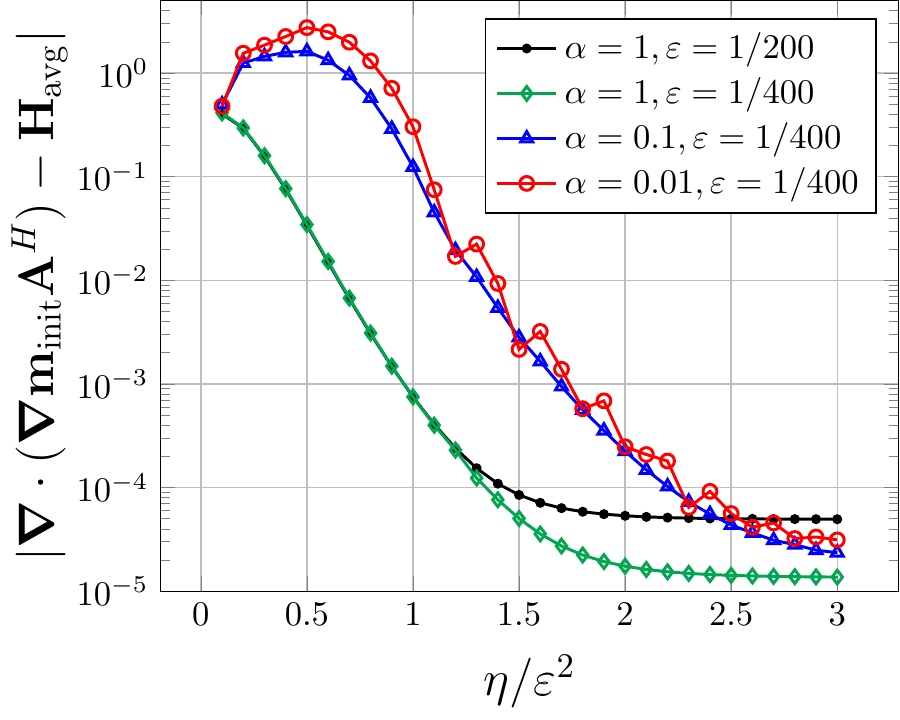}
       \caption{Averaging error $E_\mathrm{avg}$ when varying $\eta$.}
    \label{fig:err_eta}
  \end{subfigure}
  ~
  \begin{subfigure}[t]{.45\textwidth}
    \centering
     \vspace{0pt}
     \includegraphics[width=\textwidth]{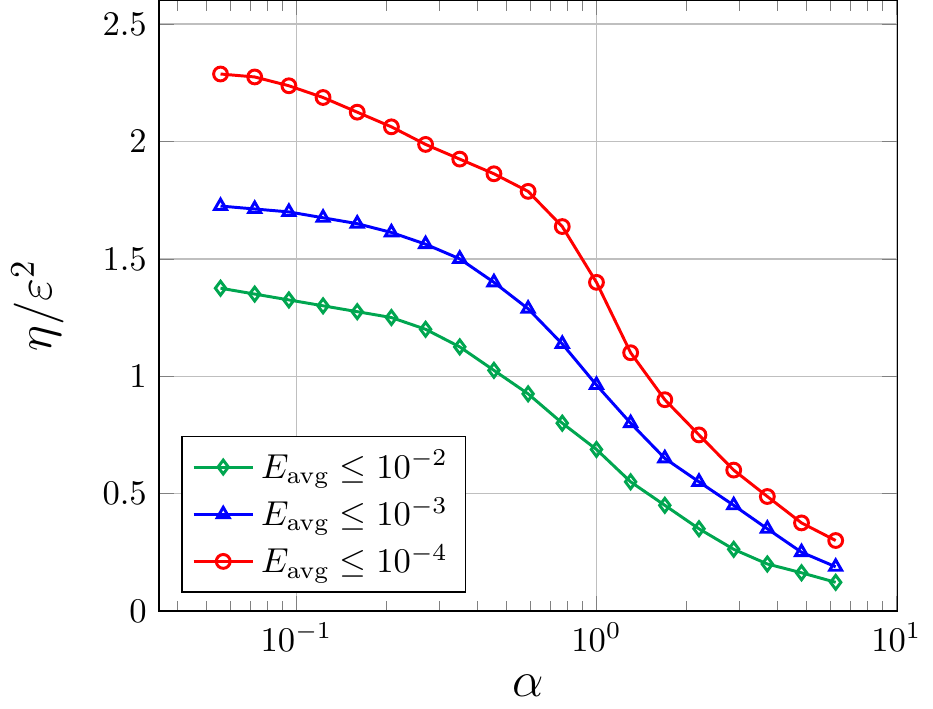}
     \vspace{-4pt}
    \caption{Time required until $E_\mathrm{avg}$ falls below given
      thresholds when varying $\alpha$.}
    \label{fig:err_eta_alfa}
  \end{subfigure}
  \caption{(EX2): Influence of the time averaging length $\eta$ on
    the overall error in one micro problem.  Here
    $\mu = 3.9 \varepsilon$ and $\mu'$ is chosen sufficiently
    big to not significantly change the results. Kernel parameters $p_t = 3$ and $q_t = 7$.%FULL
  }
\end{figure}

In \Cref{fig:err_eta_alfa}, we further investigate the influence of
the damping parameter on the time $\eta$ it takes for the averaging
error $E_\mathrm{avg}$ to fall below certain given thresholds. One
can clearly observe that high damping reduces the required time to
reach all three considered error levels. This indicates that the
introduction of artificial damping in the micro problem can help to
significantly reduce computational cost, since a shorter final time
also implies a smaller computational domain as explained in the
previous section. However, since $\alpha \gg 1$ results in a
seriously increased number of time steps necessary to get a stable
solution, as discussed in \Cref{sec:ts_stab}, we conclude that
choosing $\alpha$ around one is most favorable.

\subsubsection*{Example (EX3)}
To support the considerations regarding the choice of micro
parameters, we furthermore study the Landau-Lifshitz problem
\cref{eq:prob} with the setup (EX3).  In (EX3) the material coefficient
has a higher average and higher maximum value than in (EX2). This
results in a higher ``speed'' of the dynamics. In \Cref{fig:err_xy},
the influence of $\mu$, $\mu'$, $\eta$ and $\alpha$, respectively,
on the averaging error $E_\mathrm{avg}$ are shown for this example.

\begin{figure}[h!]
  \centering
     \begin{subfigure}[t]{.35\textwidth}
       \includegraphics[width=\textwidth]{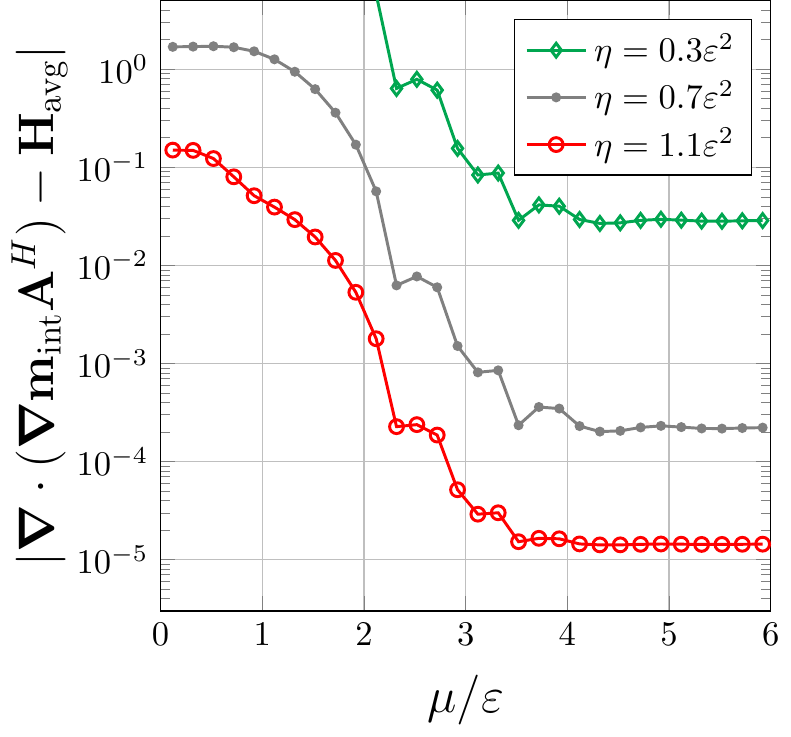}
       \caption{Influence of averaging domain parameter $\mu$, compare to \Cref{fig:err_mu}.
         \label{fig:xy_mu}
       }
   \end{subfigure}
   \hspace{1cm}
   \begin{subfigure}[t]{.35\textwidth}
     \includegraphics[width=\textwidth]{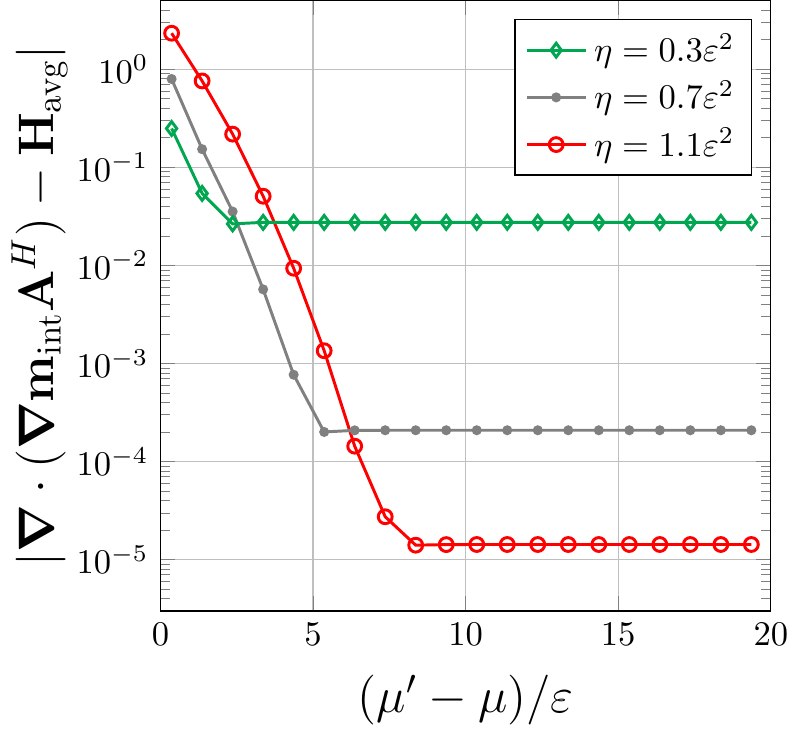}
     \caption{Influence of computational domain extension $\mu'-\mu$, see \Cref{fig:err_outer}.
     \label{fig:xy_box}}
 \end{subfigure}

     \begin{subfigure}[c]{.35\textwidth}
     \includegraphics[width=\textwidth]{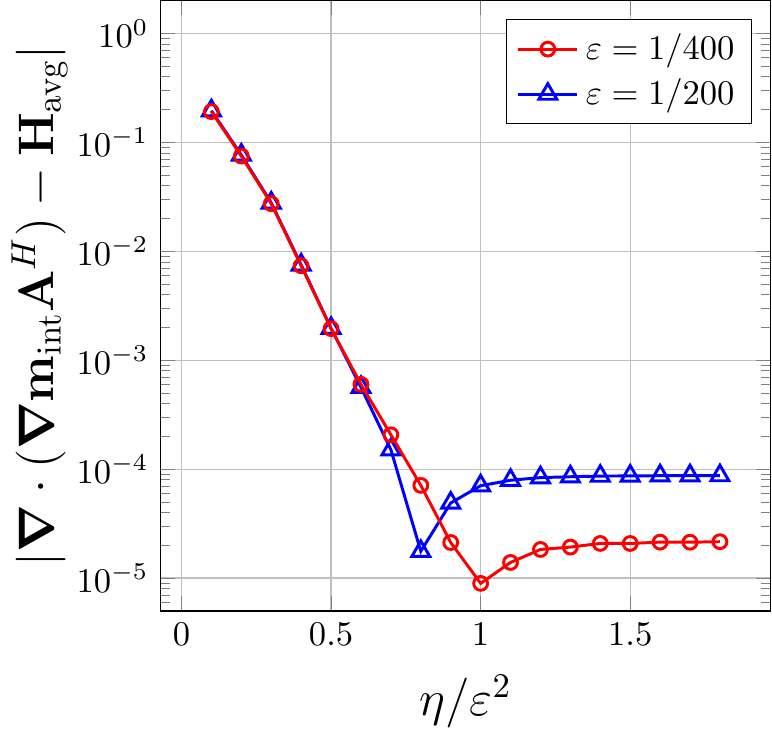}
     \caption{Influence of averaging time $\eta$, compare to \Cref{fig:err_eta}.\label{fig:xy_eta}}
   \end{subfigure}
   \hspace{1cm}
   \begin{subfigure}[c]{.35\textwidth}
     \includegraphics[width=\textwidth]{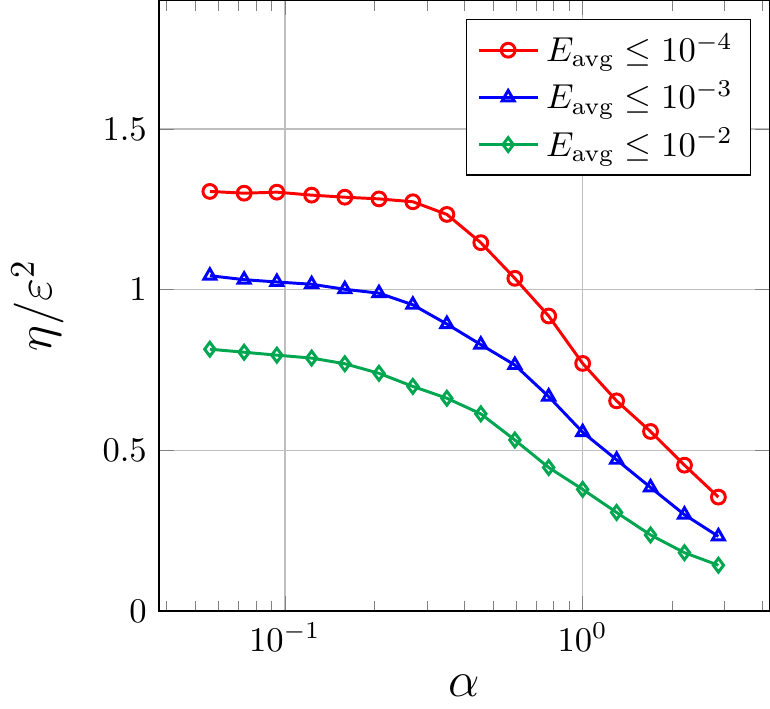}
     \vspace{-5.5pt}
     \caption{Influence of damping $\alpha$, compare to \Cref{fig:err_eta_alfa}. \label{fig:xy_alpha}}
   \end{subfigure}

   \caption{(EX3): Influence of micro domain parameters on error
     $E_\mathrm{avg}$. Parameters not explicitly given are chosen to
     not influence $E_\mathrm{avg}$ significantly. Moreover,
     $\varepsilon = 1/400$, and  $\alpha = 1$ in (a) - (c).  }
  \label{fig:err_xy}
\end{figure}

Qualitatively, the results for (EX3) are the same as (EX2), but some details differ.
%%%%%%%%%%%%%%%%%%%
A notable difference between the examples (EX2) and (EX3) is that
the time $\eta$ required for saturation of the errors is
considerably shorter in (EX3), as can be observed when comparing
\Cref{fig:xy_eta} to \Cref{fig:err_eta}. An explanation for this is
that due to the faster dynamics in (EX3), comparable effects are
achieved at shorter times.  The ratio between the times $\eta$ it
takes in (EX2) and (EX3), respectively, to reach the level where the
error no longer changes matches approximately with the ratio of the
maxima of the material coefficients.
%%%%%%%%%%%%%%%%%%%%%%%%%%

Moreover, a slightly larger $\mu$ is
required to reach the level where the errors saturate in (EX3).  The
(EX3) saturation errors for a specific value of $\eta$ are lower,
though, due to the fact that the error decreases faster with $\eta$
as discussed previously.

%%%%%%%%%%%%%%%%%%%%%
When it comes to the full size of the domain required for
the boundary error to not influence the overall error in a
significant way, one can observe somewhat larger required value of
$\mu'$ in (EX3) when comparing \Cref{fig:xy_box} to
\Cref{fig:err_outer}. This is partly due to the fact that the
overall error for a given $\eta$ is lower in (EX3). Moreover, due to
the faster dynamics, the errors at time $\eta$ have propagated
further into the domain. As a result, the computational domain has
to be chosen approximately the same size in (EX2) and (EX3) to
obtain a certain error level, even though the required $\eta$ is
smaller in (EX3).

\subsection{Computational Cost}\label{sec:cost}
The computational cost per micro problem is a major factor for an
efficient HMM implementation.  Given a spatial discretization with a
certain number of grid points, $K$, per wave length $\varepsilon$,
that is a micro grid spacing $\delta x = \varepsilon/K$, we have in total
$N^d = \left(2 K \mu'/\varepsilon\right)^d$ micro grid points.
The time step size for the micro time integration has to be chosen
as $\delta t \le C_\mathrm{stab, \alpha} \delta x^2$, hence the
number of time steps becomes $M  \ge C_\mathrm{stab, \alpha}^{-1} K^2 \eta / \varepsilon^2$.
It then holds for the computational cost per micro problem that
\begin{align}
  \mathrm{micro~cost} \sim M N^d \sim \frac{1}{C_{stab, \alpha_\mathrm{micro}}} \frac{\eta}{\varepsilon^2} \left(\frac{\mu'}{\varepsilon}\right)^d K^{2+d}.
\end{align}
It is important to note that due to the choice of parameters
$\mu' \sim \varepsilon$ and $\eta \sim \varepsilon^2$, the
computational cost per micro problem is independent of
$\varepsilon$.  This makes it possible to use HMM also for problems
where the computational cost of other approaches, resolving the fast
oscillations, becomes tremendously high.

In general, choosing higher values for $\eta, \mu'$ or $K$ results
in higher computational cost per micro problem. We therefore aim to choose these
values as low as possible without negatively affecting the overall error.
The overall cost is determined by the cost per micro problem and the
choice of the macro discretization size $\Delta X$,
\[\mathrm{cost} \sim \frac{1}{C_\mathrm{stab, \alpha_\mathrm{macro}}} (\Delta X)^{-(2+d)} \mathrm{micro~cost}.\]
This shows the importance of choosing $\Delta X$ relatively large,
wherefore it is advantageous that the overall method proposed based on
MPEA is fourth order accurate, as discussed in \Cref{sec:macro_ts}.
Since all the HMM micro problems are independent of each other, it
is moreover very simple to parallelize their computations. This can be an
effective way to reduce the overall run time of the method.

\subsection{Choice of overall setup}\label{sec:overall}
According to \cref{eq:E_approx}, the overall approximation error in
$\H_\mathrm{avg}$ is
\[E_\mathrm{approx} = E_\mathrm{avg} + E_\mathrm{disc},\] where
$E_\mathrm{disc}$  is determined by the macro discretization size
$\Delta X$ as given in
\cref{eq:E_interpol}. For a
given $\Delta X$, we therefore aim to choose the parameters
$\mu, \mu'$ and $\eta$ so that $E_\mathrm{avg}$ matches
$E_\mathrm{disc}$. Further reducing $E_\mathrm{avg}$ only increases
the computational cost per micro problem without significantly
improving the overall error.

%\left|\H_\mathrm{avg} - \bnabla \cdot (\bnabla \M(x_k, t_j) \A^H)\right|
The specific values of the discretization error depend on the given
macro solution and macro location. We here take as an example the
macro initial data and the micro problem solved to obtain
$\H_\mathrm{avg}$ at macro location $(0,0)$.  We consider
$\Delta X = 1/(12\cdot2^i)$, $i = 0, 1, 2$ and suggest in
\Cref{tab:e1} choices for $\eta$ and $\mu'$ in the example setups
(EX2) and (EX3) such that $E_\mathrm{avg}$ is slightly below the
corresponding values for $E_\mathrm{disc}$ when using fourth order
interpolation.
The averaging parameter $\mu$ is fixed to a value
such that $E_\mu$ does not significantly increase $E_\mathrm{avg}$ but not
much higher. This helps to reduce the number of parameters to
vary. Moreover, choosing lower $\mu$ results in a rather steep increase of
$E_\mathrm{avg}$ in comparison to how much the computational cost is
reduced.
We furthermore choose $\alpha = 1$ here to make it simple to compare the
suggestions to the values shown in \Cref{fig:err_eta,fig:err_outer,fig:err_mu}
as well as
\Cref{fig:err_xy}. The optimal choice for $\alpha$ would be slightly
higher.

\begin{table}[h]
  \centering

  \begin{tabular}[h]{c | c || c | c | c || c | c | c}
    \multicolumn{2}{c}{} & \multicolumn{3}{c}{(EX2)} & \multicolumn{3}{c}{(EX3)} \\
    \hline
    $\Delta X$ & $E_\mathrm{disc}$ & $\eta/\varepsilon^2$ & $a_\mathrm{max} \eta/\varepsilon^2$ & $(\mu'-\mu)/\varepsilon$   & $\eta/\varepsilon^2$ & $a_\mathrm{max} \eta/\varepsilon^2$ & $(\mu'-\mu)/\varepsilon$ \\
    \hline
    1/12 & $2.3\cdot10^{-2}$ & 0.7 & 1.09 & 4  & 0.4 & 1.02 & 3 \\
    1/24 & $1.6\cdot10^{-3}$ & 1   & 1.56 & 6 & 0.6 & 1.53  & 5 \\
    1/48 & $1\cdot10^{-4}$   & 1.4 & 2.18 & 8 & 0.8 & 2.05  & 7
  \end{tabular}
  \caption{Example micro problem setups with $\alpha = 1$, where $\mu = 3.9\varepsilon$ for (EX2) and $\mu = 4.2\varepsilon$ for (EX3) .}
  \label{tab:e1}
\end{table}

Based on the values in \Cref{tab:e1}, one can conclude that the
required sizes of the computational domains are very similar between
the considered examples. When scaled by the maximum of the
respective material coefficients, also the suggested final times are
comparable.

To further test the influence of the micro domain setup and
corresponding $E_\mathrm{avg}$ on the overall error, we consider
(EX2) on a unit square domain with periodic boundary condition,
$\alpha = 0.01$ in the original problem and a final time $T =
0.1$. We use artificial damping and set $\alpha = 1.2$ in the micro
problem.  As in the previous examples, we choose averaging kernel
parameters $p_x = p_t = 3$ and $q_x = q_t = 7$ and let
$\varepsilon= 1/400$.  We again fix $\mu = 3.9 \varepsilon$, and run
HMM with MPEA for the macro time stepping to approximate the
homogenized reference solution $\M_0$ at time $T$ for varying
$\Delta X$. \begin{figure}[h]
   \centering
   \begin{subfigure}[c]{.5\textwidth}
     \includegraphics[width=\textwidth]{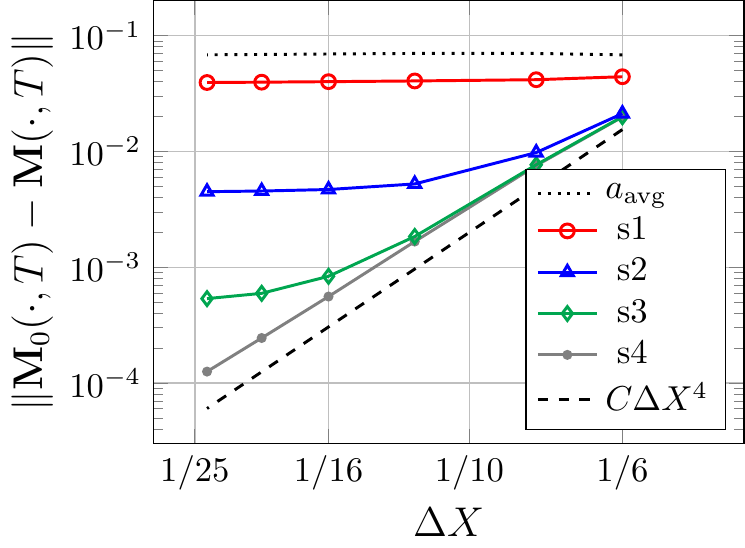}
   \end{subfigure}
   \hspace{3mm}
   \begin{subfigure}[c]{.4\textwidth}
  \begin{tabular}[h]{c | c | c | c }
     & $\eta/\varepsilon^2$ & $\mu'$ &  $E_\mathrm{avg}$ \\
    \hline
    s1 &  0.15 & 4 & $\approx 3\cdot 10^{-1}$  \\
    s2 &  0.45 & 5.5 & $\approx 3\cdot 10^{-2}$ \\
    s3 &  0.7  & 7.5 & $\approx 3\cdot 10^{-3}$ \\
    s4 &  1    & 10 & $\approx 3\cdot 10^{-4}$
  \end{tabular}
  \caption*{Micro problem setups for (EX2) (with $\alpha = 1.2$ and
    $\mu = 3.9$) and resulting $E_\mathrm{avg}$ in micro problem at
    macro location $x_0 = (0,0)$.}
  \label{tab:e2}
   \end{subfigure}
   \caption{$L^2$-norm of difference between HMM solution $\M$ and
     homogenized reference solution $\M_0$ at time $T = 0.1$ for
     (EX2). \label{fig:hmm_err_ex3}}
\end{figure}
Four different combinations of $\eta$ and $\mu'$ are used for the
micro problem, referred to as (s1)-(s4). The resulting $L^2$-norms
of the errors $\M_0 - \M$ are shown in \Cref{fig:hmm_err_ex3},
together with the error one obtains when using the average
$a_\mathrm{avg}$ of the material coefficient $a^\varepsilon$ to
approximate $\A^H$ when solving \cref{eq:hom}. Using
$a_\mathrm{avg}$ can be seen as a naive approach to dealing with the
fast oscillations in the material coefficient. It does in general
not result in good approximations. The corresponding error is
included here to give a baseline for the relevance of the HMM
solutions.

We find that with the micro problem setup (s1), corresponding to a
rather high averaging error, HMM results in a solution that is only
slightly better than the one that is obtained using the average of
the material coefficient.  However, when applying setups with lower
averaging errors, lower overall errors are achieved.  In particular,
with (s4), the setup with the lowest considered averaging error, the
overall error in \Cref{fig:hmm_err_ex3} is determined by the error
$E_\mathrm{disc}$, proportional to $(\Delta X)^4$ since fourth order
interpolation is used to obtain the initial data for the micro
problems.  For the other two setups, (s2) and (s3), the overall
errors saturate at levels somewhat lower than the respective values
of $E_\mathrm{avg}$, corresponding to $e_\mathrm{HMM}$ in
\cref{eq:e_macro}.
Note that this saturation occurs for relatively high values of
$\Delta X$.

\section{Further numerical examples} \label{sec:num_ex}

To conclude this article, we consider several numerical examples
with material coefficients that are not fully periodic. Those cases
are not covered by the theorems in \Cref{sec:hmm}, however, the HMM
approach still results in good approximations. In the 2D examples,
we again include the solution obtained when using a (local) average
of the material coefficient as an approximation to the effective
coefficient to stress the relevance of the HMM solutions.  As for
the periodic examples, we use artificial damping in the micro
problem.

\subsection*{Locally periodic 1D example}

We first consider a one-dimensional example with material coefficient
\begin{equation}
  \label{eq:loc}
  a^\varepsilon(x) = 1.1 + \tfrac{1}{4}\sin(2 \pi x + 1.1) +
  \tfrac{1}{2}\sin(2 \pi x/\varepsilon).
\end{equation}
This coefficient is locally periodic. We consider \cref{eq:prob}
with this coefficient and $\varepsilon = 1/400$, $\alpha = 0.01$ on
the unit interval with periodic boundary conditions.  A comparison
between the solution $\M^\varepsilon$ at time $T = 0.1$, obtained
using a direct numerical simulation resolving the
$\varepsilon$-scale, and corresponding HMM approximation on a coarse
grid with $\Delta X = 1/24$ is shown in \Cref{fig:loc_per}. Here the
HMM parameters are chosen to be $\mu = 3.9 \varepsilon$,
$\mu' = 8 \varepsilon$ and $\eta = 0.9 \varepsilon^2$.  Artificial
damping with $\alpha = 1.2$ is used for the micro problem.  The
averaging kernel parameters are again $p_x = p_t = 3$ and
$q_x = q_t = 7$.
\begin{figure}[h!]
  \centering
  \includegraphics[width=.75\textwidth]{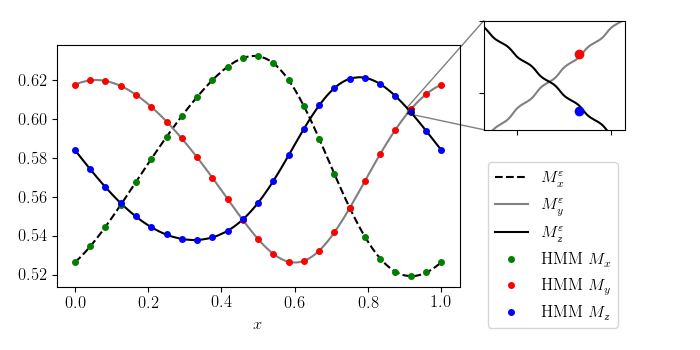}
  \caption{HMM solution to \cref{eq:prob} with $a^\varepsilon$ as in \cref{eq:loc} with
    $\varepsilon = 1/400$ and corresponding
    $\M^\varepsilon$ obtained using direct simulation resolving
    $\varepsilon$,  at $T = 0.1$.}
  \label{fig:loc_per}
\end{figure}
For the direct simulation, we use $\Delta x = 1/6000$, which
corresponds to 15 grid points per $\varepsilon$, and MPEA for time
integration.

One can clearly observe that the HMM solution is very close to the
solution obtained with a direct simulation resolving $\varepsilon$.
Moreover, note that in this example the computation time for HMM is about 15
seconds\footnote{on a computer with Intel i7-4770 CPU at 3.4 GHz},
while the direct simulation takes almost two hours.

% otherwise the same

\subsection*{Quasi-periodic 2D example}
Next, \cref{eq:prob} is solved in two space dimensions and with
material coefficient
\begin{align}\label{eq:coeff_quasi}
  a^\varepsilon(x) = (1 + 0.25\sin(2\pi x_1/\varepsilon))(1 + 0.25\sin(2\pi x_2/\varepsilon) + 0.25\sin(2\pi r x_2/\varepsilon)),
\end{align}
where $r = 1.41$ as an approximation to $\sqrt{2}$. This coefficient
is periodic in $x_1$-direction but not in $x_2$-direction. If we choose
$\varepsilon = 0.01$, though, it is periodic also in $x_2$ direction
over the whole domain $[0, 1]^2$ but not on the micro domains.  The
initial data is set as in (EX2).

To make direct numerical simulation feasible, we consider the case
$\varepsilon = 0.01$ and set $\Delta x = 1/1500$ in the direct
simulation. For HMM, the micro problem parameters are chosen to be
$\mu = 6.5\varepsilon$, $\eta = 0.7\varepsilon^2$ and
$\mu' = 9\varepsilon$. We use again averaging kernels with
$p_x = p_t = 3$ and $q_x = q_t = 7$ as well as artificial damping
with $\alpha = 1.2$ in the micro problem. On the macro scale,
$\Delta X = 1/16$. The final time is set to $T = 0.2$.

In \Cref{fig:quasi_cont}, the $x$-components of $\M^\varepsilon$,
obtained using a direct simulation resolving $\varepsilon$ and a HMM
solution to \cref{eq:prob} with material coefficient
\cref{eq:coeff_quasi} are shown. Moreover, the solution obtained
when simply using the average of $a^\varepsilon(x)$ as an
approximation is included. One can observe that the HMM solution
captures the characteristics of the overall solution well, while the
approach with an averaged coefficient does not.  To further stress
this, cross sections of the respective solutions at $x_1 = 0.5$ and
$x_2=0.5$ are shown in \Cref{fig:quasi_cross}.

\begin{figure}[hp!]
  \centering
    \begin{subfigure}[b]{\textwidth}
      \centering
      \includegraphics[width=.9\textwidth]{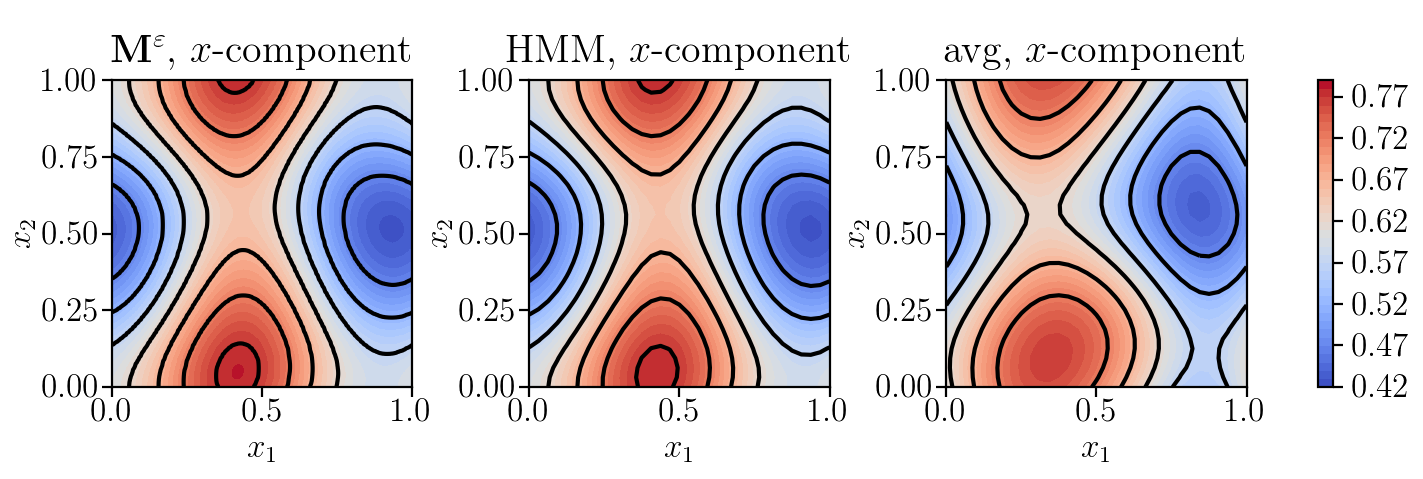}
    \caption{Contours}
     \label{fig:quasi_cont}
   \end{subfigure}
   \begin{subfigure}[b]{\textwidth}
     \centering
     \includegraphics[width=.32\textwidth]{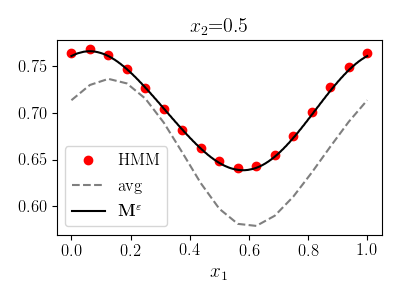}
     \qquad
     \includegraphics[width=.32\textwidth]{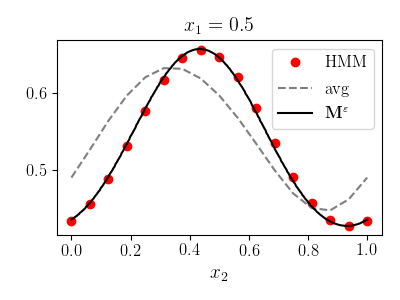}
    \caption{Cross sections}
     \label{fig:quasi_cross}
   \end{subfigure}
  \caption{Quasi-periodic example, with $\varepsilon = 0.01$, $T = 0.2$ and $\alpha = 0.01$.}
  \label{fig:quasi}
\end{figure}

Despite the choice of a rather high $\varepsilon$-value,
$\varepsilon = 0.01$, the direct simulation of this problem took
about 5 days. In comparison, the computational time of HMM was about
4 hours, which is independent of $\varepsilon$.

\subsection*{Locally periodic 2D example}
Finally, we consider a locally periodic 2D example with material coefficient
\begin{align}\label{eq:coeff_loc2D}
  a^\varepsilon(x) = 0.25 \exp\left(-\cos(2\pi(x_1+x_2)/\varepsilon) + \sin(2\pi x_1/\varepsilon)\cos(2\pi x_2)\right).
\end{align}
In this example, we set $\alpha = 0.1$ and choose a final time $T = 0.05$.

% it said mu' = 6 eps before
The HMM parameters are set to $\mu = 5 \varepsilon$, $\mu' = 7 \varepsilon$ and $\eta = 1.1 \varepsilon^2$. Again
$\alpha = 1.2$ in the micro problem. Initial data and averaging
parameters are set as in the previous example.
Direct simulation solution $\M^\varepsilon$, HMM approximation and a solution based on local
averages of $a^\varepsilon$ are shown in \Cref{fig:loc}.  Also for
this problem HMM captures the characteristics of the solution well,
in contrast to the averaging based solution.

\begin{figure}[H]
  \centering
    \begin{subfigure}[b]{\textwidth}
      \centering
      \includegraphics[width=.9\textwidth]{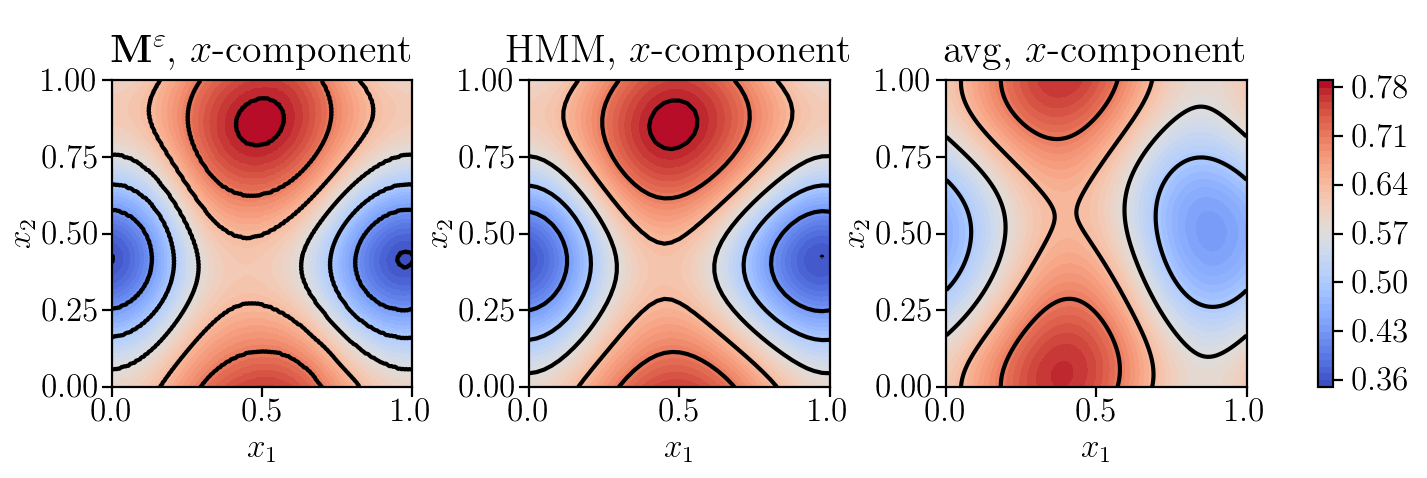}
    \caption{Contours}
     \label{fig:loc_cont}
   \end{subfigure}
   \begin{subfigure}[b]{\textwidth}
     \centering
     \includegraphics[width=.32\textwidth]{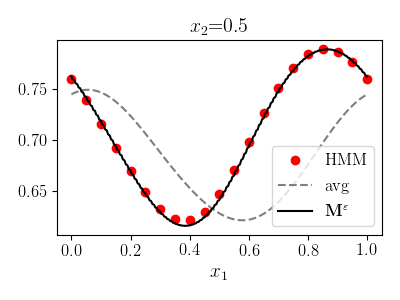}
     \qquad
     \includegraphics[width=.32\textwidth]{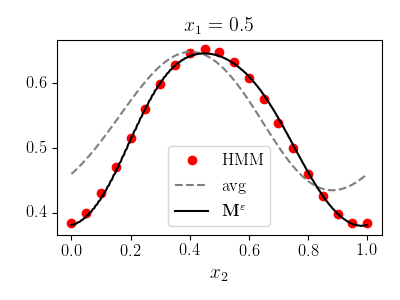}
    \caption{Cross sections}
     \label{fig:loc_cross}
   \end{subfigure}
  \caption{Locally periodic example, with $\varepsilon = 0.01$, $T = 0.05$ and $\alpha = 0.1$.}
  \label{fig:loc}
\end{figure}

% % \section{Conclusion}

% \begin{figure}[h]
%   \centering
%     \begin{subfigure}[b]{0.9\textwidth}
%     \centering
%     \includegraphics[width=\textwidth]{}
%     \caption{DNS}
%      \label{fig:loc2d_DNS}
%    \end{subfigure}

%    \begin{subfigure}[b]{0.9\textwidth}
%     \centering
%     \includegraphics[width=\textwidth]{}
%     \caption{HMM}
%      \label{fig:loc2d_HMM}
%    \end{subfigure}
%   \caption{Locally periodic example, with $\varepsilon = 0.01$, $T = 0.05$ and $\alpha = 0.1$.}
%   \label{fig:loc2d}
% \end{figure}

\bibliographystyle{acm}
\bibliography{hmm}

\end{document}